    \def\qed{\hfill\(\sqcap\kern-8.0pt\hbox{\(\sqcup\)}\)\\}
    \def\c{{\mathbb C}}
    \def\d{{\textnormal d}}
	\newtheorem{theorem}{Theorem}
	\newtheorem{lemma}{Lemma}
	\newtheorem{proposition}{Proposition}
	\newtheorem{remark}{Remark}
	\renewcommand{\footnote}{\alph{footnote}}
\title{On series expansions of zeros of the deformed exponential function}
\author{ 
{Alexey Kuznetsov\footnote{Dept. of Mathematics and Statistics,  York University,
4700 Keele Street, Toronto, ON, M3J 1P3, Canada.  
  Email: akuznets@yorku.ca}} }
\date{\today}
\begin{document}
\maketitle

\begin{abstract}
For \( q \in (0, 1) \), the deformed exponential function 
\(
f(x) = \sum_{n \geq 1} x^n q^{n(n-1)/2}/n!\)
is known to have infinitely many simple and negative zeros \( \{x_k(q)\}_{k \geq 1}\). In this paper, we analyze the series expansions of \( -x_k(q)/k \) and \( k/x_k(q) \) in powers of \( q \). We prove that the coefficients of these expansions are rational functions of the form \( P_n(k)/Q_n(k) \) and \( \widehat{P}_n(k)/Q_n(k) \), where \( Q_n(k)  \in {\mathbb Z}[k]\) is explicitly defined and the polynomials \( P_n(k),  \widehat{P}_n(k)\in {\mathbb Z}[k] \) can be computed recursively. We provide explicit formulas for the leading coefficients of \( P_n(k) \) and \( \widehat{P}_n(k) \) and compute the coefficients of these polynomials for \( n \leq 300 \). Numerical verification shows that \( P_n(k) \) and \( \widehat{P}_n(k) \) take non-negative values for all \( k \in \mathbb{N} \) and \(n\le 300\), offering further evidence in support of conjectures by Alan Sokal. 
\end{abstract}
{\vskip 0.15cm}
 \noindent {\it Keywords}: deformed exponential function, power series, sum of divisors function, symbolic computations 
{\vskip 0.25cm}
 \noindent {\it 2020 Mathematics Subject Classification }: Primary 30C15, Secondary 30E15, 34K06

\section{Introduction and the main results}\label{section_intro}

The deformed exponential function is defined for \(x, q \in {\mathbb C}\) with \(|q| \le  1\) as
\begin{equation}\label{eq:def_f(x,q)}
f(x)=f(x;q):=\sum\limits_{n\ge 0} \frac{x^n}{n!} q^{n(n-1)/2}. 
\end{equation}
It is the unique differentiable solution to the functional-differential equation
\[
f'(x)=f(qx), \;\;\; f(0)=1.
\] 
The deformed exponential function has significant applications in combinatorics \cite{Gessel_Sagan_1996,Mallows_1968, Scott_Sokal_2009} and statistical physics \cite{Scott_Sokal_2005}.
 When \(|q|=1\),  the function \(f(x)\) is entire of order one and type one and its properties have been extensively studied (see \cite{Eremenko_2007} and references therein). In this paper, we will focus on the case \(|q|<1\), where \(f\) is entire of order zero and admits the Hadamard product representation 
\begin{equation}\label{eqn_Hadamard}
f(x;q)=\prod\limits_{k\ge 1} \Big( 1- \frac{x}{x_k(q)}\Big).
\end{equation}
For $q\in (0,1)$, the zeros $x_k=x_k(q)$ of the deformed exponential function are known to be simple and negative, see \cite{Iserles_1993,Liu_1998,Morris_1972}. We assume that these zeros are arranged in the decreasing order, that is 
\( \dots <x_{k+1} < x_k<\dots<x_1<0\). 
 Note that we follow Wang and Zhang \cite{Wang_2018} and index the zeros by \(k=1,2,3,\dots\), whereas Langley \cite{Langley_2000}, Liu \cite{Liu_1998} and Sokal \cite{Sokal_2009} start indexing at \(k=0\). The implications of this choice are discussed in Section \ref{section_numerics}. 

The logarithm of the deformed exponential function is related to the generating polynomials of complete graphs \(C_n(v)\) (see \cite{Scott_Sokal_2009,Sokal_2009}) via the identity
\begin{equation}\label{ln_f_xq}
\sum\limits_{n\ge 1} \frac{x^n}{n!} C_n(v)= 
\ln \big(f(x;1+v)\big),  \;\;\; x\in \c, \; |1+v|<1. 
\end{equation}
Using \eqref{ln_f_xq} and the Hadamard product representation \eqref{eqn_Hadamard}, Sokal \cite{Sokal_2009} derived  an identity 
\begin{equation}\label{def_S_m}
\overline C_n(q):=C_n(q-1)=-(n-1)!\sum\limits_{k\ge 0} x_k(q)^{-n}, \;\;\; n\ge 1, \; q \in (0,1). 
\end{equation}
The polynomials \(\overline C_n(q)\) are connected to Tutte polynomials \cite{Scott_Sokal_2009,Gessel_Sagan_1996} and  enumerations of trees \cite{Mallows_1968}. They are known to satisfy the recurrence relation
\begin{equation}\label{eq:A_n_recurrence}
\overline C_{n+1}(q)=q^{n(n+1)/2}-
 \sum\limits_{j=1}^n \binom{n}{j} \overline C_{n+1-j}(q) 
 q^{j(j-1)/2}, 
\end{equation}
which can be established from \eqref{ln_f_xq} using logarithmic differentiation. 
The first three polynomials are given explicitly as
\begin{equation}\label{eq:S_123_explicit}
\overline C_1(q)=1, \;\; \overline C_2(q)=q-1, \;\; \overline C_3(q)=(q-1)^2(q+2). 
\end{equation}

Since the zeros of \(f(x;q)\) are simple for \(q\in (0,1)\), the functions \(x_k(q)\) are analytic in an open domain that contains \((0,1)\) (this follows from the Implicit Function Theorem). Sokal \cite{Sokal_2009,Sokal_2016} studied the analyticity of \(x_k(q)\) in the disk \(|q|<1\) and made several conjectures regarding their series expansions in powers of \(q\).  Throughout this paper, we denote \(D_t:=\{z \in \c \; : \; |z|<t\}\).

\begin{theorem}[Sokal, \cite{Sokal_2016}]\label{thm_Sokal}
\({}\)
\begin{itemize}
\item[(a)] \(x_1(q)\) is analytic in \(D_{t_1}\) with \(t_1=0.44175\) and satisfies 
\[
 1.25 |q|^{1/2} < |x_1(q)| < 1.26 |q|^{-1/2}, \;\;\; q\in
 D_{t_1};    
\]  
\item[(b)] \(x_2(q)\) is analytic in \(D_{t_2} \setminus \{0\}\) with \(t_2=0.31499\) and satisfies 
\[
 1.26 |q|^{-1/2} < |x_2(q)| < 2.38 |q|^{-3/2}, \;\;\; q\in
 D_{t_2} \setminus \{0\};    
\]   
\item[(c)] \(x_3(q)\) is analytic in 
\(D_{t_3} \setminus \{0\}\) with \(t_3=0.27814\) and satisfies 
\[
2.38 |q|^{-3/2} < |x_3(q)|<3.414 |q|^{-5/2}, \;\;\; 
q\in
 D_{t_3} \setminus \{0\};    
\]
\item[(d)] For \(q\in D_{t^*}\setminus\{0\}\) with \(t^*=0.207875\), all zeros \(\{x_k(q)\}_{k\ge 1}\) of \(f(x;q)\) are simple, analytic  and satisfy 
\[
0< |x_1| < |q|^{-1/2} < |x_2| < 2|q|^{-3/2} < |x_3| < 3 q^{-5/2} < |x_4|<4 q^{-7/2}<\dots. 
\]
\end{itemize}
\end{theorem}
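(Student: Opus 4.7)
The plan is to prove all four parts via Rouché's theorem applied on carefully chosen disks and annuli in the $x$-plane, with the contour radii tuned as functions of $|q|$.

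For part (a), I would start from the observation that $f(x;0)=1+x$, whose only zero is at $x=-1$. Writing $f(x;q)=(1+x)+R(x;q)$ with $R(x;q)=\sum_{n\ge 2}x^n q^{n(n-1)/2}/n!$ and applying Rouché on a circle $|x+1|=\rho$ yields the existence of a unique analytic zero $x_1(q)$ inside, provided $|R(x;q)|<|1+x|=\rho$ on the circle. The explicit threshold $t_1=0.44175$ would then be obtained by choosing $\rho$ and bounding $|R|$ termwise via $\sum_{n\ge 2}(1+\rho)^n|q|^{n(n-1)/2}/n!$, then optimizing in $\rho$. The two-sided size estimate $1.25|q|^{1/2}<|x_1|<1.26|q|^{-1/2}$ would follow from the same strategy applied to Rouché pairs on circles $|x|=c|q|^{\pm 1/2}$ centered at the origin.

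For parts (b) and (c), the zeros escape to infinity as $q\to 0$, so I would isolate each $x_k$ by Rouché on an annulus whose radii scale as appropriate fractional powers of $|q|^{-1}$. The natural splitting is
\[
f(x;q)=\frac{x^k}{k!}q^{k(k-1)/2}+L_k(x;q)+U_k(x;q),
\]
where $L_k=\sum_{n<k}x^n q^{n(n-1)/2}/n!$ is the head and $U_k=\sum_{n>k}x^n q^{n(n-1)/2}/n!$ is the tail. On an annulus $r_k(q)<|x|<R_k(q)$, I would show that on the inner circle $L_k$ dominates the other pieces (giving $k-1$ zeros inside, coming from zeros of the head), while on the outer circle the two monomial pieces $L_k+x^k q^{k(k-1)/2}/k!$ dominate $U_k$ (giving $k$ zeros inside); the net count of exactly one zero in the annulus must be $x_k(q)$, which is then analytic there. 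The tail $U_k$ would be summed as a geometric-type series in $|xq^k|$, while $L_k$ is handled by direct estimate in the regime where $|x|$ is large.

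The principal obstacle is attaining the specific numerical radii $t_1,t_2,t_3,t^*$ rather than merely showing analyticity in \emph{some} disk. Generic Rouché estimates give bounds of the right order, but the stated constants $0.44175$, $0.31499$, $0.27814$, $0.207875$ require both a sharp form of the tail bound (perhaps summing via the $q$-exponential or a Jacobi-triple-product identity) and a careful numerical optimization of the inner and outer radii as functions of $|q|$. For part (d), I would combine (a)--(c) with an inductive argument in $k$, verifying that on the $k$-th annulus the Rouché inequalities hold uniformly in $k$ whenever $|q|<t^*$, thereby yielding simultaneous analyticity of all $x_k$ and the strict interlacing $(k-1)|q|^{-(2k-3)/2}<|x_k|<k|q|^{-(2k-1)/2}$.
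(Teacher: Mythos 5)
Your overall Rouch\'e strategy is on the right track, and you correctly identify that the contour radii must scale like $|q|^{1/2-k}$. But your three-way decomposition $f=L_k + x^k q^{k(k-1)/2}/k! + U_k$ introduces a genuine gap: Rouch\'e on the inner circle with dominating piece $L_k$ only tells you that $f$ and $L_k$ have \emph{equal} zero counts inside; you would still need to prove that the degree-$(k-1)$ polynomial $L_k$ has all of its $k-1$ zeros inside that disk, which is not automatic and essentially requires another Rouch\'e step of the same type. The same issue recurs on the outer circle with the degree-$k$ piece $L_k + x^k q^{k(k-1)/2}/k!$. The paper sidesteps this cleanly (Lemma~\ref{lemma_Sokal}): compare $f(x;q)$ against the single doubled monomial $2\,x^k q^{k(k-1)/2}/k!$ on the circle $|x|=u|q|^{1/2-k}$; the comparator trivially has exactly $k$ zeros inside any disk about the origin, and the Rouch\'e inequality collapses to the one-parameter condition $G_k(u;t):=k!\sum_{n\ge 0}u^{n-k}t^{(n-k)^2/2}/n!<2$. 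Applying this for $k-1$ and $k$ in turn (two disks, not one annular contour) then sandwiches $x_k$ in the annulus, and the case $k=0$ gives the lower bound on $|x_1|$ directly, making your separate center-at-$-1$ decomposition unnecessary.

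For part (d), your plan to check Rouch\'e inequalities ``uniformly in $k$'' is the right intent but misses the device that actually closes the estimate uniformly: the elementary inequality $k!\,k^{n-k}\le n!$ gives $G_k(kt;1)\le\sum_{m\ge -k}t^{m^2/2}<\sum_{m\in\mathbb{Z}}t^{m^2/2}$, a $k$-independent theta sum which one checks numerically to be $<2$ at $t=t^*=0.207875$. Without some such uniformization the specific constant $t^*$ (and hence a simultaneous statement for all $k$) is out of reach; a naive induction on $k$ would produce a shrinking sequence of radii rather than a single $t^*$.
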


As the paper \cite{Sokal_2016} is not easily accessible, we provide a proof of Theorem \ref{thm_Sokal} in Section \ref{section_proofs}. This result implies that we can expand \(x_k(q)\) in Laurent series 
 \[
  x_k(q)=\sum\limits_{n\ge 1-k} c_{k,n} q^n, \;\;\;
  0<|q|<t^*,
 \]
and Sokal \cite{Sokal_2009b,Sokal_2009,Sokal_2024} showed that the first coefficient in this series is \(c_{k,1-k}=-k\). 
Consequently, there exist coefficients \(a_{k,n}\) and \(\hat a_{k,n}\) such that for \(k\in {\mathbb N}\) and \(q \in D_{t^*} \setminus \{0\}\) we have 
\begin{equation}\label{x_k_series}
x_k(q)=- k q^{1-k} \Big( 1+ \sum\limits_{n\ge 1} a_{k,n} q^n \Big)= 
 \frac{-k q^{1-k}}{1 - \sum\limits_{n\ge 1} \hat a_{k,n} q^n }.
\end{equation} 
Sokal \cite{Sokal_2009, Sokal_2024} conjectured several properties of these series representations: 
\begin{itemize} \label{Sokal_conjectures}
\item {\bf Conjecture 1}: The two power series in 
\eqref{x_k_series} converge for all \(|q|<1\).
\item {\bf Conjecture 2}: \(a_{k,n}\ge 0\) for all  \(k,n\ge 1\).
\item {\bf Conjecture 3}: \(\hat a_{k,n} \ge 0\) for all \(k,n \ge 1\).
\end{itemize}
Conjectures 2 and 3 were verified numerically in \cite{Sokal_2009, Sokal_2024} for \(k=1\) and \(n\le 65535\)
 and for some other values of \(k\) and \(n\). It is easy to see that Conjecture 3 implies Conjecture 2, which in turn implies Conjecture 1. 

Recent paper by Wang and Zhang \cite{Wang_2018}, building on earlier result of Zhang \cite{Zhang_2016}, provides a very interesting asymptotic expansion of \(x_k\) in powers of \(k^{-1}\). Let us define
 \begin{equation}\label{def_Aj} 
 A_j(q):=\sum_{n \ge 1} n^j \sigma(n) q^n, \;\;\; j\ge 0, \; |q|<1,
 \end{equation} 
where \(\sigma(n)=\sum_{d|n} d\) is the sum of divisors function.
Wang and Zhang  \cite{Wang_2018} proved that for \(q \in (0,1)\) and any \(m\in {\mathbb N}\) the following asymptotic expansion holds as \(k\to +\infty\)
  \begin{equation}\label{Wang_Zhang_result}
 x_k(q)=-k q^{1-k} \Big( 1 + \sum_{i=1}^m B_i(q)k^{-1-i} + o(k^{-1-m}) \Big), 
 \end{equation}
 where 
\begin{equation}\label{formulas_Ci} 
 B_1(q)=A_0(q), \;\; B_2(q)=-A_1(q), \;\; B_3(q)=-\frac{1}{10}A_0(q)+\frac{3}{5} A_1(q)+\frac{1}{2} A_2(q)-
 \frac{13}{10} A_0(q)^2,
 \end{equation}
 and higher order coefficients \(B_i(q)\) are polynomials in \(A_0(q)\), \(A_1(q)\), \(A_2(q)\) with rational coefficients, which can be computed recursively. Note that the coefficients \(B_i(q)\) were denoted by \(C_i(q)\) in \cite{Wang_2018}. We adopt the notation \(B_i(q)\) to avoid confusion with the generating polynomials of complete graphs  discussed earlier.

Our first result demonstrates that, in addition to the results stated in \eqref{def_S_m}-\eqref{eq:A_n_recurrence}, there exist other infinite series/products involving the zeros \(x_k\) of the deformed exponential function that can be evaluated explicitly.  
\begin{theorem}\label{thm_main1}
For \(k\ge 1\) and \(q \in (0,1)\) the following three identities hold:
\begin{align}\label{eq:x_k_prime}
&\frac{\d x_k}{\d q}=\frac{x_k^2}{2} 
\sum\limits_{j\ge 1} \frac{1}{x_j-q x_k},\\
\label{eq:x_k_sum}
 &\sum\limits_{j\ge 1} \frac{1}{x_k-qx_j}=0,\\
 \label{eq:x_k_product}
 &(q-1) x_k = \prod\limits_{\stackrel{j\ge 1}{j\neq k}}
\frac{x_j-x_k}{x_j -qx_k}.
\end{align}
\end{theorem}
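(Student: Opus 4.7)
All three identities rest on two basic tools: the logarithmic derivative of the Hadamard product \eqref{eqn_Hadamard},
\[
\frac{f'(x)}{f(x)}=-\sum_{j\ge 1}\frac{1}{x_j-x},
\]
and the functional-differential equation $f'(x)=f(qx)$. Combining them gives the master identity
\[
\frac{f(qx)}{f(x)}=-\sum_{j\ge 1}\frac{1}{x_j-x} \qquad (\star)
\]
valid at any $x$ with $f(x)\ne 0$. Each of the three claims will reduce to a short manipulation of $(\star)$ together with one explicit computation of $\partial f/\partial q$.

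Identity \eqref{eq:x_k_sum} is the quickest: rewriting $\sum_{j} 1/(x_k-qx_j)=q^{-1}\sum_{j} 1/(x_k/q-x_j)$ and applying $(\star)$ at $x=x_k/q$ yields $-q^{-1}f(x_k)/f(x_k/q)$, which vanishes since $f(x_k)=0$. For identity \eqref{eq:x_k_prime}, implicit differentiation of $f(x_k(q);q)=0$ gives $x_k'(q)=-f_q(x_k;q)/f(qx_k;q)$. Term-by-term differentiation of \eqref{eq:def_f(x,q)} followed by the reindexing $n\mapsto n+2$ produces the clean identity
\[
f_q(x;q)=\tfrac{1}{2}x^2 f(q^2 x;q);
\]
substituting this and applying $(\star)$ at $x=qx_k$ converts $f(q^2 x_k)/f(qx_k)$ into the required sum $-\sum_{j}1/(x_j-qx_k)$. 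For identity \eqref{eq:x_k_product}, the Hadamard product gives
\[
\prod_{j\ne k}\frac{x_j-x_k}{x_j-qx_k}=\frac{\prod_{j\ne k}(1-x_k/x_j)}{\prod_{j\ne k}(1-qx_k/x_j)};
\]
the denominator equals $f(qx_k)/(1-q)$ after reinstating the missing $j=k$ factor, the numerator equals $-x_k f'(x_k)$ by L'Hopital's rule applied to $f(x)/(1-x/x_k)$ at $x=x_k$, and the functional equation $f'(x_k)=f(qx_k)$ collapses the whole expression to $(q-1)x_k$.

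The main obstacle is not the algebra but the justification that $f(qx_k;q)$ and $f(x_k/q;q)$ are nonzero---i.e., that neither $qx_k$ nor $x_k/q$ coincides with another zero of $f$---and that the infinite sums and products converge. Convergence follows from the rapid growth of $|x_j|$ recorded in Theorem~\ref{thm_Sokal}(d). Non-vanishing of $f(qx_k)$ and $f(x_k/q)$ holds automatically for $q\in(0,t^*)$ by the ratio bounds of the same theorem; for larger $q\in(0,1)$, both sides of each identity are analytic in $q$ away from an isolated exceptional set, so the equalities extend by analytic continuation.
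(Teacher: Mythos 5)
Your proof is correct and follows essentially the same route as the paper: the logarithmic derivative of the Hadamard product combined with $f'(x)=f(qx)$, the formula $f_q=\tfrac12 x^2 f(q^2x)$, implicit differentiation, and computing $f'(x_k)$ from the product in two ways. One small improvement: rather than invoking the ratio bounds of Theorem~\ref{thm_Sokal}(d) together with analytic continuation to justify $f(x_k/q;q)\ne0$ for all $q\in(0,1)$, the paper argues directly --- if $f(x_k/q;q)=0$ then also $f'(x_k/q;q)=f(x_k;q)=0$, so $x_k/q$ would be a double zero, contradicting the known simplicity of all zeros for $q\in(0,1)$ --- which is shorter and avoids the continuation step entirely; the analogous observation $f(qx_k;q)=f'(x_k;q)\ne0$ handles the other non-vanishing condition.
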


To state our second result in this paper, we define
\begin{equation}\label{def_gamma_nk_si}
\gamma_{n,l}:=\Big\lfloor \frac{2n}{l(l+1)} \Big\rfloor, 
\;\;\;\qquad Q_n(x):=x^n \prod\limits_{l\ge 1} (x+l)^{\gamma_{n,l}}, \;\;\;\qquad
\mu_j(n):=\sum\limits_{l\ge 1} l^j \gamma_{n,l}. 
\end{equation}
Note that \(\gamma_{n,l}=0\) for \(l\ge \sqrt{2n}\) and \(Q_n(x) \in {\mathbb Z}[x]\) is a polynomial  of degree
\begin{equation}\label{def_M_n}
M_n:={\textnormal{deg}}(Q_n)=\mu_0(n)+n. 
\end{equation}

\begin{theorem}\label{thm_main}\({}\)
\begin{itemize}
\item[(i)] There exist polynomials \(\{P_n\}_{n\ge 1}\) and \(\{\widehat P_n\}_{n\ge 1}\) with integer coefficients such that for all \\ \(k,n \ge 1\)
\begin{equation}\label{eqn_akn_hat_akn}
a_{k,n}=\frac{P_n(k)}{Q_n(k)}, \;\;\; 
\hat a_{k,n}=\frac{\widehat P_n(k)}{Q_n(k)}.
\end{equation} 
\item[(ii)]
We have \(M_n < 3n\) and \(n^{-1} M_n \to 3\) as \(n\to \infty\). For \(n\ge 2\) the polynomials \(P_n\) and \(\widehat P_n\) have the form  
\begin{equation}\label{form_Pn}
\sigma(n)k^{M_n-2} + \sigma(n) 
\big( \mu_1(n)-n  \big)  k^{M_n-3} + 
{\textnormal{lower order terms}}.
\end{equation}
The second coefficient in the above expression is zero for \(n=2\) and is positive for all \(n\ge 3\). 
\end{itemize}
\end{theorem}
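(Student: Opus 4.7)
The plan is to derive an implicit equation for $v_k(q) := 1 + A_k(q)$ from $f(x_k) = 0$, use it to control the denominators of $a_{k,n}$ by induction on $n$, and then extract the top two coefficients of $P_n$ by matching the large-$k$ Laurent expansion of $P_n(k)/Q_n(k)$ with the Wang--Zhang asymptotic \eqref{Wang_Zhang_result}. Substituting $x_k = -kq^{1-k} v_k$ into $f(x_k)=0$ and shifting the summation index by $k$ (so the exponent $n(n-1)/2 + n(1-k) + k(k-1)/2$ becomes $m(m+1)/2$ with $m = n-k$), one obtains
\[
\sum_{m\ge 0} \frac{(-kv_k)^m}{(k+1)\cdots(k+m)}\, q^{m(m+1)/2} + \sum_{j=1}^{k} \frac{k(k-1)\cdots(k-j+1)}{(-kv_k)^j}\, q^{j(j-1)/2} = 0.
\]
Isolating the $q^0$-part $1-1/v_k$ and solving for $A_k = v_k-1$ yields the working recursion
\[
A_k = \sum_{l\ge 2} \frac{(-1)^l k^{l-1}}{(k+1)\cdots(k+l-1)}\, v_k^{l}\, q^{l(l-1)/2} + \sum_{j\ge 1} \frac{(-1)^j (k-1)\cdots(k-j)}{k^j}\, v_k^{-j}\, q^{j(j+1)/2}.
\]
Expanding $v_k^{\pm l} = (1+A_k)^{\pm l}$ by binomials realizes $a_{k,n}$ as a ${\mathbb Z}$-linear combination of products $a_{k,m_1}\cdots a_{k,m_s}$ (with $\sum m_i$ equal to $n - l(l-1)/2$ or $n - j(j+1)/2$) multiplied by the explicit rational $k$-factors above.

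Part (i) follows by induction on $n$. Assuming $a_{k,m} = P_m(k)/Q_m(k)$ with $P_m\in{\mathbb Z}[k]$ for all $m<n$, the contribution of $\prod_i Q_{m_i}(k)$ to the multiplicity of $(k+r)$ is at most $\sum_i \gamma_{m_i,r} \le \lfloor 2\sum m_i/(r(r+1)) \rfloor$ by the inequality $\sum \lfloor x_i\rfloor \le \lfloor \sum x_i\rfloor$. The outer $l$-th term adds one more copy of $(k+r)$ only when $r \le l-1$, and in that case $l(l-1)/(r(r+1)) \ge 1$, so the elementary bound $\lfloor a\rfloor - \lfloor a-b\rfloor \ge 1$ for $b \ge 1$ gives
\[
\left\lfloor \tfrac{2n - l(l-1)}{r(r+1)}\right\rfloor + 1 \le \left\lfloor \tfrac{2n}{r(r+1)}\right\rfloor = \gamma_{n,r},
\]
absorbing the extra factor into $Q_n(k)$. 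A parallel count bounds the multiplicity of $k$ by $n$, saturated by the $j=1$ term; thus $a_{k,n}\cdot Q_n(k)\in{\mathbb Z}[k]$. For $\widehat P_n$, the identity $\sum_n \hat a_{k,n} q^n = A_k/(1+A_k) = \sum_{j\ge 1}(-1)^{j-1} A_k^j$ writes $\hat a_{k,n}$ as an integer combination of products $a_{k,m_1}\cdots a_{k,m_s}$ with $\sum m_i = n$, and $\prod_i Q_{m_i}(k) \mid Q_n(k)$ by the same floor inequality.

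For part (ii), the bound $M_n < 3n$ and limit $M_n/n \to 3$ are elementary: $\mu_0(n) = \sum_l \lfloor 2n/(l(l+1))\rfloor \le \sum_l 2n/(l(l+1)) = 2n$ with strict inequality because the floor vanishes for $l > \sqrt{2n}$ while the summand does not, and a routine cutoff argument yields $\mu_0(n)/n \to 2$. To locate the top two coefficients of $P_n$, I expand $P_n(k)/Q_n(k)$ at $k=\infty$ using $Q_n(k) = k^{M_n}(1 + \mu_1(n)/k + O(k^{-2}))$ and match with \eqref{Wang_Zhang_result}. By \eqref{formulas_Ci}, $B_1(q) = A_0(q) = \sum_n \sigma(n) q^n$ and $B_2(q) = -A_1(q) = -\sum_n n\sigma(n) q^n$; comparing the coefficients of $q^n k^{-2}$ and $q^n k^{-3}$ forces $\deg P_n = M_n - 2$ with leading coefficient $\sigma(n)$ and second coefficient $\sigma(n)(\mu_1(n)-n)$. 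The vanishing at $n=2$ is the equality $\mu_1(2) = \gamma_{2,1} = 2$; for $n \ge 3$, the bound $\mu_1(n) \ge \gamma_{n,1} + 2\gamma_{n,2} = n + 2\lfloor n/3\rfloor \ge n+2$ yields positivity. The analogous statement for $\widehat P_n$ holds because $\hat a_{k,n} - a_{k,n}$ is a sum of products of at least two $a_{k,m}$'s, hence $O(k^{-4})$ as $k\to\infty$, leaving the top two Laurent coefficients unchanged.

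The main obstacle is the denominator bookkeeping in part (i). The decisive combinatorial point is that a new copy of $(k+r)$ can enter only through an $l$-th outer term with $l \ge r+1$, and this introduction is precisely paid for by the $q$-cost $q^{l(l-1)/2}$ satisfying $l(l-1)/2 \ge r(r+1)/2$---exactly the gap required for $\gamma_{n,r}$ to increase by one when $n$ grows by $l(l-1)/2$.
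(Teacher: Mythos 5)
Your approach to part (i) is correct and is essentially the paper's argument in a different wrapper: you derive the same fixed-point equation (equivalent to \eqref{eqn_w_k_fixed_point} and \eqref{eq:g=1+2sums}) and do the denominator bookkeeping by direct induction on $n$, using the floor inequalities $\sum_i\lfloor x_i\rfloor\le\lfloor\sum_i x_i\rfloor$ and $\lfloor a\rfloor\ge\lfloor a-b\rfloor+1$ for $b\ge 1$. The paper packages the same combinatorics in the class $\mathfrak{A}$ closed under multiplication and division, and runs the iteration $w_k^{(m+1)}=1+qF_k(w_k^{(m)};q)$ staying inside $\mathfrak{A}$, but the content of property (b) of the $Q_n$ and the verification in \eqref{eq_Qn_Z} is exactly your floor estimate. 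Your closing remark identifying the $q$-cost $q^{l(l-1)/2}\ge q^{r(r+1)/2}$ as the price of a new $(k+r)$-factor is a clean way to phrase the heart of the matter.

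There is, however, a genuine gap in part (ii). You ``match with \eqref{Wang_Zhang_result}'' to read off that $a_{k,n}=\sigma(n)k^{-2}-n\sigma(n)k^{-3}+o(k^{-3})$, but \eqref{Wang_Zhang_result} is a pointwise asymptotic in $k$ for each \emph{fixed real} $q\in(0,1)$. Passing from pointwise asymptotics of $w_k(q)$ to asymptotics of its individual Taylor coefficients $a_{k,n}$ requires an interchange of the two limits $k\to\infty$ and Taylor-coefficient extraction, and this is false in general without uniform control (compare $w_k(q)=1+q^k$: pointwise $w_k\to 1$, but with worse normalization such examples break coefficient-wise convergence). The paper supplies exactly the missing ingredient via Lemma~\ref{lemma3}, which establishes independently (by a contraction estimate around the fixed-point iteration) that $k^3\bigl(w_k(q)-1-A_0(q)k^{-2}\bigr)$ is uniformly bounded on a disc $|q|\le\delta$, and then invokes the Vitali--Porter theorem to upgrade the pointwise limit from \eqref{Wang_Zhang_result} to locally uniform convergence, hence convergence of Taylor coefficients. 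Your proposal needs an analogue of Lemma~\ref{lemma3} (or some other normal-family/uniform-boundedness argument on a complex neighbourhood of $0$); without it, the leading-coefficient identification in \eqref{form_Pn}, and in particular $\deg P_n = M_n-2$, is not established. The remaining pieces of your part (ii) --- the bound $M_n<3n$, the limit $M_n/n\to 3$, the positivity of $\mu_1(n)-n$ for $n\ge 3$ via $\mu_1(n)\ge n+2\lfloor n/3\rfloor$, and the $O(k^{-4})$ comparison between $\hat a_{k,n}$ and $a_{k,n}$ --- are all fine once the coefficient-level asymptotic is in place.
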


The third, fourth, and other coefficients of \(P_n\) and \(\widehat P_n\) can also be computed explicitly, although the resulting expressions are complicated. For example, the coefficient in front of \(k^{M_n-4}\) of polynomial \(P_n\) (respectively, \(\widehat P_n\)) is 
\begin{equation}\label{coeff3_Pn}
\frac{\sigma(n)}{10}\big(
5n^2+6n-1+5 \mu_1(n)^2-5 \mu_2(n)- 10 n \mu_1(n)
\big)
-\frac{N}{10} 
\sum\limits_{j=1}^{n-1} \sigma(j)\sigma(n-j),
\end{equation}
where \(N=13\) (respectively, \(N=23\)). Note that the convolution sum in \eqref{coeff3_Pn} can be expressed in terms of \(\sigma(n)\) and  \(\sigma_3(n):=\sum_{d|n} d^3\), see \cite{Ramanujan_1916} or  \cite{Kim_2015}[formula (1.4)]. A sketch of the proof of \eqref{coeff3_Pn} is provided on page \pageref{page_sketch_proof}.

This paper is organized as follows. In Section \ref{section_proofs} we present the proofs of Theorems \ref{thm_Sokal}, \ref{thm_main1} and \ref{thm_main}. Section \ref{section_numerics} focuses on the numerical computation of polynomials \(P_n\) and \(\widehat P_n\) and on the analysis of their coefficients.  Specifically, we describe the computation of these polynomials for all \(n \le 300\) and verify that \(P_n(k)\ge 0\) and \(\widehat P_n(k)\ge 0\) for all \(n \le 300\) and all \(k\in {\mathbb N}\). Our findings confirm that \(a_{n,k}\) and \(\hat a_{n,k}\) are non-negative for all \(n\le 300\) and all \(k\ge 1\),  providing additional evidence in support of Sokal's conjectures.  Furthermore, we examine the patterns of positive and negative coefficients of \(P_n\) and \(\widehat P_n\), and we investigate the zeros of these polynomials in the interval $(1,\infty)$


 \section{Proofs}\label{section_proofs}


The following proof of Theorem \ref{thm_Sokal} 
is a rephrasing of the original proof presented in \cite{Sokal_2016}.  Recall that \(D_t:=\{z \in \c \; : \; |z|<t\}\). We begin by  establishing the following auxiliary result.

\begin{lemma}\label{lemma_Sokal}
Assume that for some \(k\in {\mathbb N} \cup \{0\}\),  \(u>0\) and \(t\in (0,1)\)
\begin{equation}\label{def_G_k}
 G_k(u;t):=k! \sum\limits_{n\ge 0} \frac{u^{n-k}}{n!} t^{(n-k)^2/2}\le 2. 
\end{equation}
 Then for any \(q\in D_t \setminus\{0\}\) the function \(x\mapsto f(x;q)\) has exactly \(k\) zeros inside the disk \(D_{r}\) with \(r=u |q|^{1/2-k} \) and no zeros on its boundary.  
\end{lemma}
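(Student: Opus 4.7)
The plan is to prove this by applying Rouché's theorem on the circle \(|x|=r\), taking as the dominant piece the single monomial
\[
g(x):=\frac{x^k}{k!}\,q^{k(k-1)/2},
\]
which has a zero of order \(k\) at the origin and no other zeros in \(D_r\), and treating \(h(x):=f(x;q)-g(x)=\sum_{n\ne k} \frac{x^n}{n!}q^{n(n-1)/2}\) as a perturbation. Once the strict inequality \(|h(x)|<|g(x)|\) is established on \(|x|=r\), Rouché's theorem will give exactly \(k\) zeros of \(f(\cdot;q)\) in \(D_r\), and the strict inequality simultaneously prevents any zero from lying on the boundary circle.

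The key computation is to track the exponents of \(|q|\). On \(|x|=r=u|q|^{1/2-k}\), the \(n\)-th term of \(f(x;q)\) has modulus
\[
\frac{|x|^n}{n!}|q|^{n(n-1)/2}=\frac{u^n}{n!}|q|^{n(n-2k)/2},
\]
and in particular \(|g(x)|=\frac{u^k}{k!}|q|^{-k^2/2}\). Dividing each term by \(|g(x)|\) and summing over all \(n\ge 0\) reproduces exactly the defining series of \(G_k\):
\[
\sum_{n\ge 0}\frac{k!\,u^{n-k}}{n!}|q|^{(n-k)^2/2}=G_k(u;|q|).
\]
The triangle inequality then yields \(|h(x)|\le (G_k(u;|q|)-1)\,|g(x)|\) on \(|x|=r\).

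To upgrade this to a strict inequality, note that \(t\in(0,1)\) and \((n-k)^2/2>0\) for every \(n\ne k\), so the map \(s\mapsto G_k(u;s)\) is strictly increasing on \((0,1)\); combined with the hypothesis \(|q|<t\), this gives \(G_k(u;|q|)<G_k(u;t)\le 2\). Therefore \(|h(x)|<|g(x)|\) everywhere on \(|x|=r\), and Rouché's theorem finishes the proof. The only place where care is required is the bookkeeping that collapses the exponent \(n(1/2-k)+n(n-1)/2+k^2/2\) to \((n-k)^2/2\), so that the comparison series is recognizable as \(G_k(u;|q|)\); the rest of the argument is a standard Rouché estimate.
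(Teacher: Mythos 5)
Your proof is correct and uses essentially the same argument as the paper: Rouché's theorem on the circle $|x|=r$, the reduction of the comparison series to $G_k(u;|q|)$ via the identity $n(1/2-k)+n(n-1)/2+k^2/2=(n-k)^2/2$, and monotonicity of $G_k$ in its second argument to pass from $G_k(u;t)\le 2$ to the strict inequality $G_k(u;|q|)<2$. The only cosmetic difference is that you take the dominant piece to be $g(x)=\frac{x^k}{k!}q^{k(k-1)/2}$ and bound $|f-g|/|g|\le G_k-1$, whereas the paper uses $2\frac{x^k}{k!}q^{k(k-1)/2}$ and bounds the ratio by $\tfrac12 G_k$; both reduce to the same condition $G_k<2$.
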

\begin{proof}
Fix \(k\in {\mathbb N}\cup\{0\}\). Assume that for some \(q \in D_1 \setminus\{0\}\) and for all \(x\) on the circle \(|x|=\rho>0\), we have 
\begin{equation*}
\Big | f(x;q)- 2\frac{x^k}{k!} q^{k(k-1)/2}\Big|< 2\frac{\rho^k}{k!} |q|^{k(k-1)/2}. 
\end{equation*}
Then, by Rouche's Theorem, the function \(f(x;q)\) has exactly \(k\) zeros inside \(D_{\rho}\), and none on the circle \(|x|=\rho\). 
Next, we derive an upper bound  
\begin{equation*}
\Big | f(x;q)- 2\frac{x^k}{k!} q^{k(k-1)/2}\Big| 
= \Big |  \sum\limits_{n\ge 0} (-1)^{\delta_{n,k}} \frac{x^n}{n!} q^{n(n-1)/2}\Big| \le 
\sum\limits_{n\ge 0}  \frac{|x|^n}{n!} |q|^{n(n-1)/2}.
\end{equation*}
Here \(\delta_{n,k}=1\) if \(n=k\) and \(\delta_{n,k}=0\) otherwise. 
Combining these results, we see that if for some \(\rho>0\) and \(q \in D_1 \setminus\{0\}\) 
\begin{equation}\label{prop_roots_1}
\sum\limits_{n\ge 0}  \frac{\rho^n}{n!} |q|^{n(n-1)/2} < 
2\frac{\rho^k}{k!} |q|^{k(k-1)/2},
\end{equation} 
then the function \(f(x;q)\) has \(k\) roots inside \(D_{\rho}\).
 Now we set \(|q|=t\), \(\rho=u t^{1/2-k}\), divide both sides of \eqref{prop_roots_1} by the right-hand side and, after simplifying the resulting left-hand side, we obtain the following result:  the inequality \eqref{prop_roots_1} (with \(|q|=t\) and \(\rho=u t^{1/2-k}\)) is equivalent to \(G_k(u;t)<2\). The function \(G_k(u,t)\)
   is manifestly increasing in \(t\). Thus, if \(G_k(u,t) \le 2\), then also
   \(G_k(u,|q|) < 2\) whenever \(|q| < t\).  This completes the proof of Lemma 1.
\end{proof}

\noindent
{\bf Proof of Theorem  \ref{thm_Sokal}:}
Let \(t_1=0.44175\). 
We verify numerically that 
\[G_0(1.25; t_1)=1.99164\cdots<2.\] By Lemma \ref{lemma_Sokal},  for \(q\in D_{t_1}\setminus \{0\}\), the function \(f(x;q)\) has no zeros in the disk \(|x|\le 1.25|q|^{1/2}\).Similarly, we verify that 
 \[G_1(1.26;t_1)=1.99999424\cdots<2.\] By Lemma \ref{lemma_Sokal}, for \(q \in D_{t_1}\setminus \{0\}\) the function \(x\mapsto f(x;q)\) has exactly one zero \(x_1(q)\) which satisfies 
\(|x_1(q)|< 1.26 |q|^{-1/2}\) (and it has no zeros on the boundary of this disk). Moreover, \(x_1(q)\) is analytic in \(D_{t_1}\) (note that \(x_1(0)=-1\), thus \(q=0\) is a removable singularity for \(x_1(q)\)). Combining the above facts gives us the result in item (i) of Theorem \ref{thm_Sokal}. 

We follow the same approach for other values of \(k\). We  numerically verify that \(G_2(2.38; t_2)<2\), where \(t_2=0.31499\). According to Lemma \ref{lemma_Sokal}, for every \(q \in D_{t_2} \setminus \{0\}\) the function \(f(x;q)\) has exactly two zeros \(x_1(q)\) and \(x_2(q)\) in the disk \(|x|<2.38 |q|^{-3/2}\). Given the results about \(x_1(q)\) established earlier and the fact that there are no zeros satisfying \(|x|=1.26 |q|^{-1/2}\), we conclude that \(x_2(q)\)  lies in the annulus
\[
 1.26 |q|^{-1/2} < |x_2| < 2.38 |q|^{-3/2} 
\]   
 for \(q\in D_{t_2} \setminus \{0\}\). The proof of Theorem \ref{thm_Sokal}(c) follows similarly by verifying that $G_3(3.414;t_3)<2$, where \(t_3=0.27814\), and applying Lemma \ref{lemma_Sokal}.  

To prove item (d), we use the inequality \(k! k^{n-k} \le  n! \)
for all \(k\ge 1\) and \(n\ge 0\) and check that for \(t\in (0,1)\)
\[
G_k(kt;1)\le \sum\limits_{n\ge 0} t^{(n-k)^2/2} =
\sum\limits_{m\ge -k} t^{m^2/2}< \sum\limits_{m \in {\mathbb Z}} t^{m^2/2}.
\]
For $t= t^*=0.207875$ we compute 
\[
\sum\limits_{m \in {\mathbb Z}} (t^*)^{m^2/2}=1.9999999368\cdots<2.
\]
Thus, $G_k(kt;1) < 2$ for all \(k\in {\mathbb N}\) and \(0<t\le t^*\).
 Applying Lemma \ref{lemma_Sokal} gives the result in item (d).  
\qed

\noindent
{\bf Proof of Theorem  \ref{thm_main1}:}
The proof relies on the identities
\begin{equation}\label{eq:f_x_f_q}
f_x(x;q)=\frac{\partial}{\partial x} f(x;q)=f(qx;q), \;\;\; f_q(x;q)=\frac{\partial}{\partial q} f(x;q)=\frac{1}{2} x^2 f(q^2 x;q), \;\;\; x\in \c, \;\; |q|<1, 
\end{equation}
which can be derived directly from the definition \eqref{eq:def_f(x,q)}.  From \eqref{eq:f_x_f_q} and the Hadamard product factorization \eqref{eqn_Hadamard}  we find:  
\begin{equation}\label{eq:logarithmic_derivative}
\frac{f(qx;q)}{f(x;q)}=\frac{f_x(x;q)}{f(x;q)}=\frac{\partial}{\partial x} \ln (f(x;q))= \sum\limits_{j\ge 1} \frac{1}{x-x_j}.
\end{equation}
Next we set  \(x=x_k/q\) in \eqref{eq:logarithmic_derivative} and obtain 
\eqref{eq:x_k_sum} using the facts that \(f(x_k;q)=0\) and \(f(x_k/q;q) \neq 0\). If the latter condition was false, then \(f\) would have a double zero at \(x_k/q\), which is impossible, since all zeros $x_k$ are simple for \(q\in (0,1)\).

Next, taking derivative in \(q\) of both sides of the identity \(f(x_k(q); q)=0\), we obtain
\[
 f_x(x_k(q);q) x_k'(q)+ f_q(x_k(q);q)=0. 
\]
Using the above equation and \eqref{eq:f_x_f_q} and \eqref{eq:logarithmic_derivative} we compute
\[
x_k'=-\frac{f_q(x_k;q)}{f_x(x_k;q)}=-\frac{x_k^2}{2} \times \frac{f(q^2 x_k;q)}{f(q x_k;q)}=-\frac{x_k^2}{2} 
\sum\limits_{j\ge 1} \frac{1}{x_k q - x_j}.
\]
This establishes \eqref{eq:x_k_prime}. 

To prove \eqref{eq:x_k_product}, we  compute \(f_x(x_k;q)\) in two ways. Using \eqref{eqn_Hadamard} and the fact that \(f(x_k;q)=0\), we have
\begin{align*}
 f_x(x_k;q)&=\lim_{h\to 0} \frac{1}{h} f(x_k+h;q)=
  \lim_{h\to 0} \frac{1}{h} \prod\limits_{j\ge 1} \Big( 1 - \frac{x_k+h}{x_j} \Big)
  \\
  &=
   \lim_{h\to 0} \frac{1}{h} \Big( 1 - \frac{x_k+h}{x_k} \Big) 
   \times  \prod\limits_{\stackrel{j\ge 1}{j\neq k}} \Big( 1 - \frac{x_k+h}{x_j} \Big)
   = -\frac{1}{x_k} \prod\limits_{\stackrel{j\ge 1}{j\neq k}} \Big( 1 - \frac{x_k}{x_j} \Big). 
\end{align*}
On the other hand, using \eqref{eq:f_x_f_q}, we  conclude that
\[
f_x(x_k;q)=f(q x_k ; q )=\prod\limits_{j\ge 1} \Big( 1 - \frac{q x_k}{x_j} \Big)
=(1-q) \prod\limits_{\stackrel{j\ge 1}{j\neq k}} \Big( 1 - \frac{q x_k}{x_j} \Big).
\]
Equating the two expressions for \(f_x(x_k;q)\) gives us the desired result \eqref{eq:x_k_product}.
\qed

Before proving Theorem \ref{thm_main}, we establish two lemmas. 
For \(i\ge 1\) and \(k\neq 0\), define
\begin{equation}\label{def_alpha_i}
\alpha_i(k)=\prod\limits_{l=1}^i \Big(1-\frac{l}{k} \Big),
\end{equation}
and for \(w \in \c \setminus \{0\}\) and \(|q|<1\), define 
\begin{equation}\label{def_F_k_wq}
F_k(w;q):=\sum\limits_{i\ge 1} (-1)^i 
\Big[ \alpha_i(k) w^{-i} -  \alpha_i(-k)^{-1} w^{i+1} \Big]q^{i (i-1)/2}.
\end{equation}
We also introduce functions \(w_k(q)\) via  
\(x_k(q)=- k q^{1-k} w_k(q)\). 

\begin{lemma}\label{lemma2}\({}\)
\begin{itemize}
\item[(a)] For \(k \ge 1\) and \(q\in (0,1)\) the function \(w_k(q)\) is the unique solution of the fixed-point equation 
\begin{equation}\label{eqn_w_k_fixed_point}
w_k(q)=1+q F_k(w_k(q);q).
\end{equation}  
\item[(b)]
Fix \(k\ge 1\). Assume that for some \(\delta\in (0, 1/20]\) the function \(w_k^{(0)}(q)\) is analytic in \(|q|<\delta\) and satisfies \(|w_k^{(0)}(q) - 1|\le 1/2\) for all \(|q|\le \delta\). Define a sequence of functions \(\{w_k^{(m)}(q)\}_{m\ge 0}\) via
\begin{equation}\label{def_w^n}
w_k^{(m+1)}(q)=1+qF_k(w_k^{(m)}(q);q). 
\end{equation}
The functions \(w_k^{(m)}(q)\) are analytic in \(|q| <\delta\) and for every \(m\ge 1\) and \(|q| \le  \delta\) we have
\begin{equation}\label{w_contraction}
|w_k^{(m+1)}(q)-w_k^{(m)}(q)| \le 2^{-m}  |w_k^{(1)}(q)-w_k^{(0)}(q)|. 
\end{equation}
The functions \(w_k^{(m)}(q)\) converge to  \(w_k(q)\) as \(m \to +\infty\), uniformly on \(|q|\le \delta\). 
\end{itemize}
\end{lemma}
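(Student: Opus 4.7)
My plan for part (a) is to start from \(f(x_k(q);q)=0\) and substitute \(x_k=-kq^{1-k}w_k\) directly into \eqref{eq:def_f(x,q)}. The generic term \(x_k^n q^{n(n-1)/2}/n!\) becomes \((-kw_k)^n q^{\phi(n)}/n!\) with \(\phi(n):=n(n+1-2k)/2\); this quadratic attains its minimum \(-k(k-1)/2\) exactly at the consecutive indices \(n=k-1\) and \(n=k\). After clearing a common factor of \(q^{-k(k-1)/2}(-1)^{k-1}k^{k-1}/(k-1)!\), these two terms collapse to \(1-w_k\), while the rest of the sum splits into an upper branch \(n=k+i\) (producing \(w^{i+1}\) contributions) and a lower branch \(n=k-1-i\) (producing \(w^{-i}\) contributions; the constraint \(n\ge 0\) is enforced automatically by \(\alpha_i(k)=0\) for \(i\ge k\)). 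The elementary identities \((k-1)!/(k-1-i)!=k^i\alpha_i(k)\) and \((k-1)!\,k^{i+1}/(k+i)!=\alpha_i(-k)^{-1}\) then identify these two branches as precisely the \(w^{-i}\) and \(w^{i+1}\) terms making up \(qF_k(w_k;q)\), yielding \eqref{eqn_w_k_fixed_point}. Uniqueness will follow from the contraction in part (b) (which pins \(w_k(q)\) down in a small disk around \(1\) for small \(q\)) together with analytic continuation along \((0,1)\), using that \(w_k\) is analytic there by Theorem~\ref{thm_Sokal}(d) and the Implicit Function Theorem.

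For part (b), I would apply Banach's fixed-point theorem to \(\Phi_q(w):=1+qF_k(w;q)\) on the closed disk \(\bar B:=\{w\in\c:|w-1|\le 1/2\}\). Two a priori bounds do most of the work: \(|\alpha_i(k)|\le 1\) (since each factor satisfies \(|1-l/k|\le 1\) for \(1\le l\le k-1\), and \(\alpha_i(k)=0\) once \(i\ge k\)) and \(|\alpha_i(-k)^{-1}|\le 1\) (because \(\alpha_i(-k)=\prod_{l=1}^i(1+l/k)\ge 1\)). Combined with \(|w|^{-i}\le 2^i\) and \(|w|^{i+1}\le (3/2)^{i+1}\) on \(\bar B\), these majorize both \(|qF_k(w;q)|\) and \(|q\,\partial_w F_k(w;q)|\) by absolutely convergent series of the shape \(\delta\sum_{i\ge 1}C_i\delta^{i(i-1)/2}\), whose tails are crushed by the super-exponential weight \(\delta^{i(i-1)/2}\). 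A direct check at \(\delta=1/20\) then yields simultaneously the invariance \(\Phi_q(\bar B)\subset\bar B\) and the Lipschitz estimate \(|\Phi_q'|\le 1/2\) on \(\bar B\). The inductive inequality \eqref{w_contraction} is then immediate from \(|w_k^{(m+1)}-w_k^{(m)}|=|\Phi_q(w_k^{(m)})-\Phi_q(w_k^{(m-1)})|\le\tfrac12|w_k^{(m)}-w_k^{(m-1)}|\); analyticity of each \(w_k^{(m)}\) in \(|q|<\delta\) propagates by induction because \(F_k(w;q)\) is jointly analytic on \(\{|w|\ge 1/2\}\times\{|q|<1\}\) and all iterates remain in \(\bar B\); and uniform geometric convergence on \(|q|\le\delta\), together with the Weierstrass theorem, produces an analytic limit solving \eqref{eqn_w_k_fixed_point}, which matches \(w_k(q)\) on \((0,\delta)\) and hence throughout \(D_\delta\).

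The step I expect to be the main obstacle is the concrete numerical verification that \(\delta=1/20\) is small enough for both invariance and contraction. This reduces to bounding two explicit series in \(\delta\)—one majorizing \(|qF_k|\), the other \(|q\,\partial_w F_k|\)—by \(1/2\). The \(i=1\) terms are not damped by any power of \(\delta\) beyond the prefactor coming from the \(q\) in front of \(F_k\) and therefore have to be estimated sharply; from \(i=2\) onward the super-exponential factor \(\delta^{i(i-1)/2}\) crushes the tail immediately, so in practice the verification reduces to a small handful of explicit inequalities dominated by the \(i=1\) and \(i=2\) contributions.
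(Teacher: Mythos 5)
Your proof is correct and follows essentially the same route as the paper: for part~(a) the identical substitution $x_k=-kq^{1-k}w_k$, isolation of the two minimizing indices $n=k-1,k$ of the $q$-exponent, and re-indexing via the $\alpha_i$-identities; for part~(b) the same bounds $|\alpha_i(k)|\le 1$, $|\alpha_i(-k)^{-1}|\le 1$ and the same majorant series (which the paper evaluates as $|F_k|<5$ and $|\partial_w F_k|<10$ on $|w-1|\le 1/2$, $|q|\le 1/20$, giving $|qF_k|<1/4$ for invariance and $|q\,\partial_w F_k|<1/2$ for contraction). The remaining differences are cosmetic: you package the iteration as Banach's fixed-point theorem where the paper carries out the invariance-plus-contraction induction by hand, and you route uniqueness through the small-$q$ contraction and analytic continuation rather than appealing directly to the Implicit Function Theorem as the paper does.
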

\begin{proof}
The proof of part (a) is based on ideas from \cite{Sokal_2009}.  Fix \(k\) and \(q\) and write \(x_k(q)=- k q^{1-k} w\). The equation \(f(x_k;q)=0\) is equivalent to
 \[
 f(x_k;q)=\sum\limits_{n\ge 0} (-1)^n\frac{ k^n}{n!} w^n 
 q^{(1-k)n+n(n-1)/2}=0,
 \]
which implies
 \[
 q^{k(k-1)/2} f(x_k;q)= \sum\limits_{n\ge 0} (-1)^n\frac{ k^n}{n!} w^n q^{(n-k+1)(n-k)/2}=0.
 \]
We separate the above series into two sums \(0\le n \le  k-2\) and \(n \ge k+1\) and isolate the terms with \(n=k-1\) and \(n=k\): 
\begin{align*}
(-1)^{k-1}  \frac{k^{k-1}}{(k-1)!} w^{k-1} + 
(-1)^{k} \frac{ k^{k}}{k!} w^{k}
&+  \sum\limits_{n=0}^{k-2} (-1)^n \frac{ k^n}{n!} w^n q^{(n-k)(n-k-1)/2}\\
&+ \sum\limits_{n\ge k+1} (-1)^n \frac{ k^n}{n!} w^n q^{(n-k)(n-k-1)/2}=0.
\end{align*}
Dividing both sides of this equation by \(k^{k-1} w^{k-1} /(k-1)!\) and rearranging the terms results in
\begin{align*}
w=1
& -  k! \sum\limits_{n=0}^{k-2} (-1)^{n-k} \frac{ k^{n-k}}{n!} w^{n-k+1} q^{(n-k+1)(n-k)/2}\\
&- k! \sum\limits_{n\ge k+1} (-1)^{n-k} \frac{ k^{n-k}}{n!} w^{n-k+1} q^{(n-k+1)(n-k)/2}.
\end{align*}
Now we change the index of summation \(n = k-1-i\) in the first sum and \(n = k+i\) in the second sum and we obtain
\begin{equation}\label{eq:g=1+2sums}
w=1
+  k! \sum\limits_{i=1}^{k-1} (-1)^{i} \frac{ k^{-i-1}}{(k-i-1)!} w^{-i} q^{i (i+1)/2}
- k! \sum\limits_{i\ge 1} (-1)^{i} \frac{k^{i}}{(k+i)!} w^{i+1} q^{i(i+1)/2}.
\end{equation}
Using the definition of \(\alpha_i(k)\) in \eqref{def_alpha_i}, we simplify 
\[
\frac{ k! k^{-i-1}}{(k-i-1)!}=\alpha_i(k), \;\;\; 
\frac{k! k^{i}}{(k+i)!}=\alpha_i(-k)^{-1}. 
\]
Noting that \(\alpha_i(k)=0\) for \(i\ge k\), we can finally
rewrite \eqref{eq:g=1+2sums} in the form
$w=1+q F_k(w;q)$. The uniqueness of the solution to this fixed-point equation follows from the Implicit Function Theorem. This ends the proof of part (a). 

To prove part (b), we first estimate $F_k(w;q)$ and its derivative $\partial_w F_k(w;q)$. We first check that for all \(i,k \in {\mathbb N}\) we have \(|\alpha_i(k)|<1\) 
and \(|\alpha_i(-k)^{-1}|<1\). Thus, for every \(k \in {\mathbb N}\), \(|w-1|<1/2\) and \(|q|<1/20\) we have 
\begin{equation}\label{estimate_Fk}
|F_k(w;q)| \le 
\sum\limits_{i\ge 1}  
\Big[   |w|^{-i} +  |w|^{i+1} \Big]
 |q|^{i (i-1)/2} < \sum\limits_{i\ge 1}  
\Big[   2^{i} +  (3/2)^{i+1} \Big]
 20^{-i (i-1)/2}=4.6204\cdots<5,
\end{equation}
and 
\begin{align}\label{estimate_Fkw}
|\partial_w F_k(w;q)| &\le 
\sum\limits_{i\ge 1}  
\Big[  i |w|^{-i-1} +  (i+1) |w|^{i} \Big]
 |q|^{i (i-1)/2} \\ \nonumber & < \sum\limits_{i\ge 1}  
\Big[   i 2^{i+1} + (i+1) (3/2)^{i} \Big]
 20^{-i (i-1)/2}= 8.1452\cdots<10.
 \end{align}
When deriving the above estimates we used the fact that \(|w-1|<1/2\) implies \(1/2<|w|<3/2\). 

Since \(|w_k^{(0)}(q)-1|<1/2\) for \(|q|<\delta\le 1/20\), the inequality \eqref{estimate_Fk}  implies that \(w_k^{(1)}(q)\) is analytic in \(|q|<\delta\). Moreover, for \(|q|<\delta\) we have \(F_k(w_k^{(0)}(q);q)|<5\), thus (with the help of \eqref{def_w^n}) we conclude that  \(|w_k^{(1)}(q)-1|<5|q|<5 \delta<1/2\).  In the same way, we can prove (by induction) that all functions \(w_k^{(m)}(q)\) are analytic in \(|q|<\delta\) and satisfy \(|w_k^{(m)}(q)-1|<1/2\).

Finally, to establish \eqref{w_contraction}, we note that for for \(m\ge 1\)  \(|q|\le \delta\)
\begin{align*}
|w_k^{(m+1)}(q)-w_k^{(m)}(q)|&=|q| \times |F(w_k^{(m)}(q);q)-F(w_k^{(m-1)}(q);q)| \\ & \le  \delta  \times 
\max\limits_{|w-1|<1/2} |\partial_w F_k(w;q)| \times |w_k^{(m)}(q)-w_k^{(m-1)}(q)|\\
& < \frac{1}{20} \times 10  \times |w_k^{(m)}(q)-w_k^{(m-1)}(q)|  =\frac{1}{2} |w_k^{(m)}(q)-w_k^{(m-1)}(q)|.
\end{align*}
This ends the proof of \eqref{w_contraction}. 
The contraction inequality \eqref{w_contraction} ensures that \(w_k^{(m)}(q)\) converges (uniformly on \(|q|\le \delta\) ) to $w_k(q)$ as \(m\to +\infty\). 
\end{proof}

\begin{lemma}\label{lemma3} There exists \(\delta \in (0,1)\) such that 
\begin{equation}\label{w_k_asymptotics}
w_k(q)=1+A_0(q) k^{-2} + O(k^{-3}), \;\;\; k \to +\infty,
\end{equation}
 uniformly in \(|q|\le \delta\). 
\end{lemma}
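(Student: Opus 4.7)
The plan is to solve the fixed-point equation \eqref{eqn_w_k_fixed_point} perturbatively for the leading $1/k^2$ behaviour of $v_k(q) := w_k(q) - 1$, with all estimates uniform on a disk $|q| \le \delta$ for sufficiently small $\delta \in (0, 1/20]$. I apply Lemma \ref{lemma2}(b) with initial iterate $w_k^{(0)}(q) \equiv 1$, which satisfies the hypothesis trivially and yields analyticity of $w_k$ on $|q| < \delta$ together with the geometric contraction \eqref{w_contraction}.

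The first step establishes the uniform bound $|v_k(q)| = O(k^{-2})$ on $|q| \le \delta$. Telescoping \eqref{w_contraction} gives $|v_k(q)| \le 2|qF_k(1;q)|$. The $1/k$ terms of $\alpha_i(k)$ and $\alpha_i(-k)^{-1}$ both equal $-i(i+1)/(2k)$ and cancel, so direct Taylor expansion yields $\alpha_i(k) - \alpha_i(-k)^{-1} = -k^{-2}\sum_{l=1}^i l^2 + O(k^{-3})$ for each fixed $i$; combined with the trivial bounds $|\alpha_i(k)|, |\alpha_i(-k)^{-1}| \le 1$ and the rapid decay in $i$ provided by the factor $|q|^{i(i-1)/2}$ in \eqref{def_F_k_wq}, this yields $|qF_k(1;q)| \le C(\delta)\, k^{-2}$, hence $v_k(q) = O(k^{-2})$ uniformly on $|q| \le \delta$.

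Next I Taylor-expand the map around $w=1$:
\[
v_k(q) = q F_k(1;q) + q\, \partial_w F_k(1;q)\, v_k(q) + R_k(v_k(q); q),
\]
with $|R_k(v;q)| \le C v^2 = O(k^{-4})$ uniformly on $|q| \le \delta$, from a $\partial_w^2 F_k$ bound analogous to \eqref{estimate_Fkw}. The same estimate \eqref{estimate_Fkw} ensures $|q\, \partial_w F_k(1;q)| \le 1/2$ for $\delta \le 1/20$, so the factor $1 - q\, \partial_w F_k(1;q)$ is bounded away from zero and I invert to obtain
\[
v_k(q) = \frac{q F_k(1;q)}{1 - q\, \partial_w F_k(1;q)} + O(k^{-4}).
\]
A term-by-term expansion produces $q F_k(1;q) = N(q)/k^2 + O(k^{-3})$ and $1 - q\, \partial_w F_k(1;q) = D(q) + O(k^{-1})$ for explicit analytic functions $N, D$ on $|q| \le \delta$; dividing yields $v_k(q) = N(q)/(D(q)\, k^2) + O(k^{-3})$ uniformly.

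It remains to identify $N(q)/D(q)$ with $A_0(q)$. Wang and Zhang's expansion \eqref{Wang_Zhang_result}--\eqref{formulas_Ci} gives $v_k(q) = A_0(q)/k^2 + O(k^{-3})$ for each real $q \in (0,1)$; matching with the display above on the real interval $(0,\delta)$ forces $N(q)/D(q) = A_0(q)$ there, and hence on the entire disk $|q| < \delta$ by analytic continuation. The principal obstacle is precisely this combinatorial identification of the $1/k^2$ coefficient with the sum-of-divisors generating function: invoking the Wang--Zhang asymptotics is the shortest route, whereas a self-contained proof would require verifying the $q$-series identity $N(q) = A_0(q) D(q)$ directly via Ramanujan-style convolution identities for $\sigma(n)$.
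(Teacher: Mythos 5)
Your argument is correct in outline, but it takes a genuinely different route from the paper's, and the key difference lies in how the coefficient is identified as $A_0(q)$. The paper starts the iteration from the ansatz $w_k^{(0)}(q) = 1 + A_0(q)k^{-2}$, plugs it into one step of the fixed-point map, and directly verifies that $k^3\,|w_k^{(1)}(q)-w_k^{(0)}(q)|$ is uniformly bounded; the ``crucial step'' is the $q$-series identity
\[
A_0(q) = \sum_{i\ge 1} (-1)^{i+1}\Big[\tfrac16 i(i+1)(2i+1) + (2i+1)A_0(q)\Big]q^{i(i+1)/2},
\]
obtained by taking the logarithmic derivative of Jacobi's triple product (after Zhang), and telescoping via \eqref{w_contraction} then finishes. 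You instead start from $w_k^{(0)}\equiv 1$, derive $v_k = O(k^{-2})$, linearize the fixed-point equation, solve for the leading coefficient as $N(q)/D(q)$, and identify it with $A_0(q)$ by matching against the Wang--Zhang pointwise asymptotics on $(0,\delta)$ and analytically continuing. Note that your ratio $N(q)/D(q)=A_0(q)$ is, after clearing the denominator, \emph{exactly} the paper's Jacobi-triple-product identity displayed above — so the two proofs are Fourier-dual: the paper proves the identity (via Jacobi) and concludes the asymptotics, while you assume the asymptotics (via Wang--Zhang) and conclude the identity. Since the paper already cites \eqref{Wang_Zhang_result} freely in the proof of Theorem~\ref{thm_main}(ii), your invocation of it here is legitimate and not circular, just less self-contained. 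Your route also buys something the paper's doesn't: the perturbative solve would have found the $k^{-2}$ coefficient without knowing the answer in advance.

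One place where you should be more careful is the claim that $qF_k(1;q) = N(q)k^{-2}+O(k^{-3})$ \emph{uniformly} on $|q|\le\delta$. The termwise expansion $\alpha_i(k)-\alpha_i(-k)^{-1} = -k^{-2}\sum_{l=1}^i l^2 + O(k^{-3})$ has an implied constant that grows with $i$ (factorially, in fact), and ``the rapid decay in $i$'' is only a hand-wave until you control that growth. The paper resolves this with the Stirling-number computation \eqref{alpha_asymptotics}--\eqref{product_alpha_asymptotics}, which gives the explicit bound $O((i+1)!^2 k^{-3})$; since $(i+1)!^2\,|q|^{i(i-1)/2}$ is summable for $|q|<1$, the interchange of the $i$-sum and the $k\to\infty$ expansion is justified. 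You should insert a similar quantitative bound (the Stirling-number argument, or any estimate giving at most factorial growth in $i$) before concluding the uniform $O(k^{-3})$ error; the same remark applies to your second-derivative bound controlling the Taylor remainder $R_k$.
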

\begin{proof}
We recall that \(A_j(q)\) is defined via \eqref{def_Aj}. 
First, we establish the following result: as \(k \to \infty\)
\begin{equation}\label{alpha_asymptotics}
\alpha_i(k)=
1-\frac{i(i+1)}{2k}+\frac{(i-1) i (i+1) (3i+2)}{24 k^2} + 
O((i+1)! |k|^{-3}),
\end{equation}
uniformly in \(i \ge 1\) (in other words, the implied constant in the big-O term is absolute).  To prove this result, we recall that \(s(m,n)\) denote Stirling numbers of the first kind (see \cite{NIST}[Section 26.8] for their definition and various properties). We compute   
\begin{equation}\label{alpha_asymptotics_proof}
\alpha_i(k)=k^{-i-1} (k)_{i+1}= 
\sum\limits_{l=0}^{i+1} s(i+1,l) k^{l-i-1}=
1-\frac{i(i+1)}{2k}+\frac{(i-1) i (i+1) (3i+2)}{24 k^2} + 
{\mathcal E}_i(k),
\end{equation}
where 
\[
|{\mathcal E}_i(k)| = \sum\limits_{l=0}^{i-2}
s(i+1,l) k^{l-i-1} < (i+1)! |k|^{-3}.
\]
The first equality in \eqref{alpha_asymptotics_proof} follows from the definition of Stirling numbers of the first kind (formula (26.8.7) in \cite{NIST}). The second equality \eqref{alpha_asymptotics_proof}
requires explicit expressions for \(s(i+1,l)\) for \(l \in \{i-1,i,i+1\}\), which can be found in the subsection 26.8(iii) in \cite{NIST}. The upper bound for \({\mathcal E}_i(k)\) follows from 
the fact that 
\[
\sum\limits_{l=0}^{i+1} |s(i+1,l)|=(i+1)!, 
\]
see formula (26.8.29	) in \cite{NIST}. 

From \eqref{alpha_asymptotics}, after some simplification, we derive the following asymptotic result: 
\begin{equation}\label{product_alpha_asymptotics}
\alpha_i(k) \alpha_i(-k)=1-\frac{i(i+1)(2i+1)}{6k^2} + 
O((i+1)!^2 k^{-3}), \;\;\; k\to +\infty,
\end{equation}
which also holds uniformly in \(i \ge 1\).

Since \(A_0(q)\) is analytic in \(|q|<1\) and satisfies \(A_0(0)=0\) (see \eqref{def_Aj}), we can find \(\delta\le 1/20\) small enough, such that \(|A_0(q)|\le 1/2\) for all \(|q| \le \delta\). We define \(w_k^{(0)}(q)=1-A_0(q)k^{-2}\) and check that \(|w_k^{(0)}(q)-1|\le 1/2\) for all \(k\ge 1\) and \(|q|\le \delta\). We  define the sequence of functions \(w_k^{(m)}(q)\) via   \eqref{def_w^n}. 

Our next goal is to obtain an upper bound for \(|w_k^{(1)}(q)-w_k^{(0)}(q)|\). Recalling formulas \eqref{def_w^n} and \eqref{def_F_k_wq}, we compute 
\begin{align}
\label{bound_proof1}
&k^3 (w_k^{(1)}(q)-w_k^{(0)}(q))=k^3 q F_k(1+A_0(q)k^{-2};q)-kA_0(q) \\
\nonumber
&=
\sum\limits_{i\ge 1} (-1)^i 
\frac{\alpha_i(-k)^{-1}}{ (1+A_0(q)k^{-2})^{i}} k^3 \Big[ \alpha_i(k) \alpha_i(-k)  - (1+A_0(q)k^{-2})^{2i+1}\Big]q^{i (i+1)/2}
-k A_0(q)
\end{align}
Using the Binomial Theorem, the fact that 
\[
\sum\limits_{l=0}^{2i+1} \binom{2i+1}{l}=2^{2i+1}
\]
and the upper bound \(|A_0(q)|<1/2\) for \(|q|\le \delta\), we check that 
\begin{align*}
(1+A_0(q)k^{-2})^{2i+1}&=1+(2i+1) A_0(q)k^{-2} + \sum\limits_{l=2}^{2i+1} \binom{2i+1}{l} A_0(q)^l k^{-2l}\\
&=
1+(2i+1) A_0(q)k^{-2}+
O(4^{i} k^{-4}), \;\;\; k\to +\infty, 
\end{align*}
uniformly in \(i\ge 1\) and \(|q|\le \delta\). Similarly, 
\[
(1+A_0(q)k^{-2})^{-i}=1+O(2^i k^{-2}), \;\;\; k\to +\infty,
\]
uniformly in \(i\ge 1\) and \(|q|\le \delta\).
Combining the above results with 
\eqref{alpha_asymptotics}, \eqref{product_alpha_asymptotics} and 
\eqref{bound_proof1} we obtain
\begin{align}
\nonumber
k^3 (w_k^{(1)}(q)-w_k^{(0)}(q))&=
k 
\sum\limits_{i\ge 1} (-1)^i 
  \Big[ -\frac{1}{6} i(i+1)(2i+1)  - (2i+1)A_0(q) \Big]q^{i (i+1)/2}
\\
\label{k^3_bound1}
&-k A_0(q)+ O(1),
\end{align}
as \(k\to +\infty\), uniformly in \(|q|\le \delta\).
 
Now comes the crucial step in the proof: the function \(A_0(q)\) satisfies an identity
\begin{equation}\label{k^3_bound2}
A_0(q)=\sum\limits_{i\ge 1} (-1)^{i+1} \Big[ 
 \frac{1}{6} i(i+1)(2i+1) + (2i+1) A_0(q)\Big] q^{i(i+1)/2}, \;\;\; |q|<1. 
\end{equation}
The above result is derived  by taking the logarithmic derivative of Jacobi's triple product identity
\[
\prod\limits_{i\ge 1} (1-q^i)^3=
\sum\limits_{i\ge 1} (-1)^{i+1} (2i-1) q^{i(i-1)/2},
\]
see the proof of Lemma 2 and Theorem 1 in \cite{Zhang_2016}. Combining \eqref{k^3_bound1} and \eqref{k^3_bound2} we obtain the following result: There exists some absolute constant \(C>0\) such that \(k^3 |w_k^{(1)}(q)-w_k^{(0)}(q)|<C\) for all \(k\ge 1\) and \(|q|\le \delta\). Using \eqref{w_contraction} and the telescoping sum argument, this implies an upper bound
\[
k^3 |w_k^{(m+1)}(q)-w_k^{(0)}(q)| \le 
2 k^3  |w_k^{(1)}(q)-w_k^{(0)}(q)| \le 2 C 
\] 
for all \(m\ge 0\), \(k\ge 1\) and \(|q|\le \delta\). Passing to the limit as \(m\to +\infty\) gives us the desired result.  
\end{proof}

\noindent
{\bf Proof of Theorem \ref{thm_main}:}
We recall that \({\mathbb Z}[x]\) denotes the set of polynomials in \(x\)-variable with integer coefficients and that the polynomials \(\{Q_n\}_{n\ge 1}\) are defined via
\eqref{def_gamma_nk_si}. 
We will require the following properties of these polynomials: 
\begin{itemize}
\item[(a)] \(Q_{n_2}(x)/Q_{n_1}(x) \in {\mathbb Z}[x]\) for all \(1\le n_1 < n_2\),
\item[(b)] \(Q_n(x)/(Q_j(x) Q_{n-j}(x)) \in {\mathbb Z}[x]\) for all \(1 \le j \le n-1\). 
\end{itemize}
The first property follows from the definition \eqref{def_gamma_nk_si} and  the fact that \(\gamma_{n_1,l}\le \gamma_{n_2,l}\) for \(n_1<n_2\). The second property follows from the inequality \(\gamma_{j,l} + \gamma_{n-j,l} \le \gamma_{n,l}\).

We define the class \({\mathfrak A}\) of formal power series 
\[
f(q,k)=1+\sum\limits_{n\ge 1} R_n(k) q^n
\]
by requiring  \( R_n(k)Q_n(k) \in {\mathbb Z}[k]\) for all \(n\ge 1\).  In other words, \(f\in {\mathfrak A}\) if the coefficient in front of \(q^n\) (denoted by \(R_n(k)=[q^n] f\)) is a rational function of \(k\) with denominator \(Q_n(k)\) and  numerator in \({\mathbb Z}[k]\). 
We verify that this class \({\mathfrak A}\) is closed with respect to multiplication and division: 
\begin{itemize}
\item[(c)] If \(f_i(q,k) \in {\mathfrak A}\) for \(i=1,2\), then  \(f_1(q,k) f_2(q,k) \in {\mathfrak A}\). 
\item[(d)] If \(f(q,k) \in {\mathfrak A}\), then \(1/f(q,k)\in {\mathfrak A}\). 
\end{itemize}
To prove (c), let \(R^{(i)}_n(k)\) be the coefficient in front of \(q^n\) of the power series \(f_i(q,k)\). 
Then the coefficient of \(q^n\) of \(f_1(q,k) f_2(q,k)\) is 
\[
R_n(k)=R^{(1)}_n(k)+R^{(2)}_n(k)+
\sum\limits_{j=1}^{n-1} R^{(1)}_j(k)R^{(2)}_{n-j}(k).
\]
We need to check that \(R_n(k) Q_n(k) \in {\mathbb Z}[k]\).
Note that \(R^{(i)}_n(k)Q_n(k) \in {\mathbb Z}[k]\), by our assumption \(f_i(q;k) \in {\mathfrak A}\). We also have 
\[
Q_n(k)R^{(1)}_j(k)R^{(2)}_{n-j}(k)=\frac{Q_n(k)}
{Q_{j}(k) Q_{n-j}(k)} \times R^{(1)}_j(k)Q_j(k) \times R^{(2)}_{n-j}(k) Q_{n-j}(k) \in {\mathbb Z}[k],
\]
due to property (b) above. The proof of item (d) uses the same ideas and is left to the reader. 

Now we set \(w_k^{(0)}(q)=1\) and define the sequence of functions \(w_k^{(m)}(q)\) via \eqref{def_w^n}.  Our goal is to prove that \(w_k^{(m)}(q) \in {\mathfrak A}\). We will do this by induction. The claim is clearly true for \(w_k^{(0)}(q)=1\). Assume that \(w_k^{(m)}(q) \in {\mathfrak A}\) for some \(m\ge 0\). Then, from the above facts (c) and (d), we conclude that \((w_k^{(m)}(q))^{-i}\in {\mathfrak A}\) and \((w_k^{(m)}(q))^{i+1} \in {\mathfrak A}\) for all \(i\ge 1\). Using the fact \(\alpha_i(k) k^i \in {\mathbb Z}[k]\), we check that 
\begin{equation}\label{def_h_i1}
h_i(q;k):=1 + \alpha_i(k) (w_k^{(m)}(q))^{-i} q^{i(i+1)/2}, \;\;\; i\ge 1,
\end{equation}
 also belongs to \({\mathfrak A}\). Next, we will show that  
\begin{equation}\label{def_h_i2}
H_i(q;k)=1+\alpha_i(-k)^{-1} 
(w_k^{(m)}(q))^{i+1} q^{i(i+1)/2}, \;\;\; i\ge 1, 
\end{equation}
 is in \({\mathfrak A}\). Note that 
\[
\alpha_i(-k)^{-1}=\frac{k^i}{(k+1)(k+2)\dots (k+i)}.
\]
Since \((w_k^{(m)}(q))^{i+1} \in {\mathfrak A}\), we can write it in the form
\[
(w_k^{(m)}(q))^{i+1}=1+\sum\limits_{n\ge 1} \frac{\widetilde P_n(k)}{Q_n(k)} q^n,
\]
for some polynomials \(\widetilde P_
n(k) \in {\mathbb Z}[k]\).
Thus, when \(1\le n<i(i+1)/2\),  the coefficient in front of \(q^n\) of the series expansion of \(H_i(q;k)\) will be zero. The coefficient in front of \(q^{i(i+1)/2}\) of the series expansion of \(H_i(q;k)\) will be \(\alpha_i(-k)^{-1}\), and it is easy to check that \(Q_{i(i+1)/2}(k)
\alpha_i(-k)^{-1} \in {\mathbb Z}[k]\), since the term \((k+l)\) (with \(1\le l \le i\)) will appear in this expression raised to the power
\[
\gamma_{n,l}-1=\Big\lfloor \frac{i(i+1)}{l(l+1)} \Big\rfloor-1 \ge 0. 
\] 
For \(n>i(i+1)/2\), the coefficient in front of \(q^n\) of the series expansion of \(H_i(q;k)\) will be
\[
\frac{k^i}{(k+1)(k+2)\dots (k+i)} \times 
\frac{\widetilde P_{n-i(i+1)/2}(k)}{Q_{n-i(i+1)/2}(k)},
\]
and we need to verify that 
\begin{equation}\label{eq_Qn_Z}
\frac{Q_n(k)}{(k+1)(k+2)\dots (k+i)Q_{n-i(i+1)/2}(k)} \in {\mathbb Z}[k]. 
\end{equation}
The above statement can be established by considering 
the term \((k+l)\) (with \(1\le l \le i\)) in the above expression.
This term appears in the numerator raised to power \(\gamma_{n,l}\), and it appears in the denominator raised to the power  
\[
1+\gamma_{n-i(i+1)/2,l}=1+\Big\lfloor \frac{2n}{l(l+1)}-\frac{i(i+1)}{l(l+1)} \Big\rfloor
\le \Big\lfloor \frac{2n}{l(l+1)} \Big\rfloor=\gamma_{n,l}.
\]
Thus \eqref{eq_Qn_Z} holds true and we have proved that \(H_i(q;k) \in {\mathfrak A}\). 

Now we can complete the induction step. We have assumed that \(w_k^{(m)}(q) \in {\mathfrak A}\) and demonstrated that \(h_i(q;k)\) and \(H_i(q;k)\), as defined in \eqref{def_h_i1} and \eqref{def_h_i2}, also belong to \({\mathfrak A}\). 
Thus, for any \(I \ge 1\), we have 
\[
1+\sum\limits_{i=1}^{I} (-1)^i 
\big[ h_i(q;k) - H_i(q;k)] \in {\mathfrak A}. 
\]
Note that adding one more term \((-1)^{I+1}(h_{I+1}(q;k) - H_{I+1}(q;k))\) to the above sum will not affect the coefficient in front of \(q^n\) with \(n \le  I(I+1)/2\) (see 
\eqref{def_h_i1} and \eqref{def_h_i2}). Since 
\[
w_k^{(m+1)}(q)=1+\sum\limits_{i\ge 1} (-1)^i 
\big[ h_i(q;k) - H_i(q;k)], 
\]
which follows from \eqref{def_F_k_wq} and \eqref{def_w^n}, we conclude that \(w_k^{(m+1)}(q) \in {\mathfrak A}\). This completes the induction step. 

Thus, we have proved that for every \(m\ge 0\) 
\[
w_k^{(m)}(q)=1+\sum\limits_{n\ge 0} \frac{P_n^{(m)}(k)}{Q_n(k)} q^n,
\]
for some polynomials \(P_n^{(m)}(k) \in {\mathbb Z}[k]\). 
From \eqref{def_w^n} it is straightforward to establish that  \(w_k^{(m)}(q)-w_k(q)=O(q^{m+1})\) as \(q\to 0\). Therefore, \(w_k^{(m)}(q)\) and \(w_k(q)\) have identical first \(m+1\) terms of Taylor series at zero. In particular, the polynomials \(P_n^{(m)}(k)\) are the same for all \(m \ge n\). This proves that \(w_k(q)\) also belongs to \({\mathfrak A}\), so that there exist polynomials \(\{P_n\}_{n\ge 1}\) with integer coefficients, such that 
\[
w_k(q)=1+\sum\limits_{n\ge 1} \frac{P_n(k)}{Q_n(k)} q^n. 
\]  
This, and the fact that \(1/w_k(q) \in {\mathfrak A}\), ends the proof of part (i).

Next, we will prove the results concerning the degree of \(Q_n\). We have 
\[
n^{-1} M_n = 1 + \sum\limits_{l\ge 1} \frac{\gamma_{n,l}}{n} < 
1 + 2 \sum\limits_{l\ge 1} \frac{1}{l(l+1)}=
1+2\sum\limits_{l\ge 1} \Big[ \frac{1}{l} - \frac{1}{l+1}\Big] = 3. 
\]
On the other hand, for every \(m\ge 1\)
\begin{align*}
\liminf\limits_{n \to +\infty} 
n^{-1} M_n
&= 1+ \liminf\limits_{n \to +\infty} 
\frac{1}{n} \sum\limits_{l\ge 1}  \gamma_{n,l}
\ge 1+ \liminf\limits_{n \to +\infty} 
\frac{1}{n} \sum\limits_{l=1}^m  \gamma_{n,l}
\\
&= 1+\sum\limits_{l=1}^m \frac{2}{l(l+1)}
= 1 + 2\sum\limits_{l=1}^m \Big[ \frac{1}{l} - \frac{1}{l+1}\Big]= 3-\frac{2}{m+1}.
\end{align*}
The above result and the upper bound 
\(n^{-1} M_n < 3\) imply the result 
\(n^{-1} M_n \to 3\) as \(n\to +\infty\).

Let us now prove \eqref{form_Pn}. 
The asymptotic formula \eqref{Wang_Zhang_result} states that 
\begin{equation}\label{k^3_formula}
k^3 (w_k(q)-1-A_0(q) k^{-2}) \to -A_1(q), \;\;\; k\to +\infty, 
\end{equation} 
 for all \(q\in (0,1)\). Lemma \ref{lemma3} ensures that the functions \(k^3 (w_k(q)-1-A_0(q)k^{-2})\) are uniformly bounded for \(k \ge 1\) and \(|q|\le \delta\).  By the Vitali-Porter Theorem, the convergence in \eqref{k^3_formula} holds uniformly for  \(|q|\le \delta\), which implies the convergence of the corresponding Taylor coefficients. Thus, for \(n\ge 1\), we have  
\[
\lim_{k\to +\infty} k^3 \Big[\frac{P_n(k)}{Q_n(k)} - \sigma(n)k^{-2} \Big] = -n \sigma(n), 
\]
or, equivalently, 
\begin{equation}\label{P_nQ_n_asymptotics}
\frac{P_n(k)}{Q_n(k)}=\sigma(n)k^{-2} - n \sigma(n) k^{-3} + o(k^{-3}), \;\;\; k\to +\infty. 
\end{equation}
The above result proves that \({\textnormal{deg}}(P_n)=
{\textnormal{deg}}(Q_n)-2\) and that the leading coefficient of \(P_n\) is \(\sigma(n)\). 

To find the second leading coefficient of \(P_n\), we use the expansion
\begin{equation}\label{Q_leading_terms}
Q_n(k)=k^{M_n}+ \mu_1(n)  k^{M_n-1} +
\frac{1}{2} \big( \mu_1(n)^2-\mu_2(n)\big) 
k^{M_n-2} + {\textnormal{ lower order terms }},
\end{equation}
which follows from the definition \eqref{def_gamma_nk_si}. 
Combining this expansion with the asymptotic result \eqref{P_nQ_n_asymptotics}, we obtain the second coefficient in \eqref{form_Pn}. 

To verify that the second coefficient in \eqref{form_Pn} is positive for \(n\ge 3\), we compute it for \(n=3\) and \(n=4\), obtaining the values of 8 and 14. For \(n\ge 5\), we estimate 
\[
-n + \sum\limits_{l\ge 1} l \gamma_{n,l} \ge 
-n + \sum\limits_{l=1}^{2} l \gamma_{n,l} \ge 
-n + \sum\limits_{l=1}^{2} l \Big[ \frac{2n}{l(l+1)} - 1 \Big]
=\frac{2n}{3} - 3>0.
\]
Thus the second coefficient in  \eqref{form_Pn} is positive for \(n\ge 3\).
\qed

\label{page_sketch_proof}

We now outline the proof of \eqref{coeff3_Pn}. First, we establish the stronger version of \eqref{w_k_asymptotics} in the form
\[
w_k(q)=1+B_1(q) k^{-2} + B_2(q) k^{-3} + O(k^{-4}), \;\;\; k\to +\infty,
\]
uniformly in some neighbourhood of \(q=0\) (the coefficients \(B_i(q)\) are defined in \eqref{formulas_Ci}). This can be done by applying the same method as in the proof of Lemma \ref{lemma3}, but starting with the initial approximation \[
w_k^{(0)}(q)=1+
B_1(q) k^{-2} + B_2(q) k^{-3}
\] and showing that \(k^4|w_k^{(1)}(q)-w_k^{(0)}(q)|\) is uniformly bounded for \(k\ge 1\) and  \(q\) in some neighbourhood of zero. Using the same argument as in the proof of Theorem \ref{thm_main}(ii), we conclude that
\[
\frac{P_n(k)}{Q_n(k)}=\sigma(n) k^{-2} - n \sigma(n) k^{-3}
+ [q^n]B_3(q) k^{-4}+O(k^{-5}), \;\;\; k\to +\infty,
\]
where \([q^n]B_3(q)\) is the coefficient in front of \(q^n\) of 
the power series \(B_3(q)\), which is given in \eqref{formulas_Ci}. 
Formulas \eqref{form_Pn} and \eqref{Q_leading_terms} imply that
\begin{align*}
\frac{P_n(k)}{Q_n(k)}&=k^{-2} \frac{\sigma(n)+\sigma(n)\big(\mu_1(n)-n\big) k^{-1} + \xi k^{-2} + O(k^{-3})}
{ 1+ \mu_1(n)  k^{-1} +
\frac{1}{2} \big( \mu_1(n)^2-\mu_2(n)\big) 
k^{-2}+O(k^{-3}) }\\&=\sigma(n) k^{-2} - n \sigma(n) k^{-3}
+ [q^n]B_3(q) k^{-4}+O(k^{-5}),
\end{align*}
and from here one can obtain the coefficient \(\xi\) as given in \eqref{coeff3_Pn}. To prove the corresponding result for \(\widehat P_n\), we would use the same techniques applied to the asymptotic expansion 
\[
1/w_k(q)=1-B_1(q) k^{-2} - B_2(q) k^{-3} - \big( B_3(q)-B_1(q)^2\big) k^{-4} + O(k^{-5}). 
\]


 \section{Investigating the polynomials \(P_n\) and \(\hat P_n\)}\label{section_numerics}


\label{our_algorithm}

Let us describe our algorithm for computing the polynomials \(P_n\) and \(\hat P_n\). The main idea is to initialize \(w_k^{(0)}(q)=1\) and iteratively compute \(w_k^{(m)}(q)\) for \(m\ge 1\) via \eqref{def_w^n}. 
Another important ingredient is the following recurrence relation 
\begin{equation}
\label{recursion_hat_P_n}
\widehat P_n(k)=P_n(k) - Q_n(k)\sum\limits_{j=1}^{n-1} 
  \frac{ \widehat P_j(k) P_{n-j}(k)}{Q_j(k)Q_{n-j}(k)}, n\ge 1, 
\end{equation}
which is derived by comparing the coefficients in front of \(q^n\) in the identity \(w_k(q) \times (1/w_k(q))=1\).  
Now, applying \eqref{def_w^n} with \(w_k^{(0)}(q)=1\) we compute 
\begin{equation*}
w_k^{(1)}(q)=1+\frac{1}{k(k+1)}q+O(q^2),  \;\;\; q\to 0,
\end{equation*}
which implies \(P_1(k)=1\). Formula \eqref{recursion_hat_P_n} then gives us \(\widehat P_1(k)=1\). Applying iteration \eqref{def_w^n} once again, we compute 
\begin{equation*}
w_k^{(2)}(q)=1+\frac{1}{k(k+1)}q+\frac{3k^2-1}{k^2(k+1)^2}q^2+O(q^3),
\end{equation*} 
yielding \(P_2(k)=3k^2-1\), and from \eqref{recursion_hat_P_n} we obtain \(\widehat P_2(k)=3k^2-2\). In the next iteration, we have
\begin{equation*}
w_k^{(3)}(q)=1+\frac{1}{k(k+1)}q+\frac{3k^2-1}{k^2(k+1)^2}q^2+\frac{4k^5+8k^4-7k^3-4k^2+6k+4}{k^3(k+1)^3(k+2)}q^3+ O(q^4),
\end{equation*} 
which gives \(P_3(k)\) and then \(\widehat P_3(k)\) via \eqref{recursion_hat_P_n}. In general, \(m\)-th iteration of \eqref{def_w^n} yields the polynomial \(P_m\), which is then used to find \(\widehat P_m\) via 
\eqref{recursion_hat_P_n}.    
The first ten polynomials \(P_n\) and \(\widehat P_n\) can be easily computed using any symbolic computation software. We present them in Appendix \ref{AppendixB}. 

\begin{remark}\label{remark_shifted}{
\normalfont
Our polynomials \(P_n\) and \(\widehat P_n\)  differ from those computed by Sokal \cite{Sokal_2009}. This discrepancy arises because we start indexing the zeros \(x_k\) of the deformed exponential function at   \(k=1\), whereas Sokal \cite{Sokal_2009} starts at \(k=0\). Thus, the polynomials in \cite{Sokal_2009} correspond to \(P_n(k+1)\) and \(\widehat P_n(k+1)\). For example, 
\[
P_2(k+1)=3k^2 + 6k + 2, \;\;\;
P_3(k+1)=4k^5 + 28k^4 + 65k^3 + 63k^2 + 29k + 11, \;\;\;
{\textnormal{ etc.}}
\]}
\end{remark}

\begin{remark}\label{remark2}{
\normalfont
An alternative recursive method for computing polynomials \(P_n\) and \(\widehat P_n\) can be formulated, which may offer advantages for theoretical analysis.  We denote  \({\mathbf n}=\{n_j\}_{j\ge 0}\), where \(n_j\) are non-negative integers,
and for \(l, m \in {\mathbb N}\) we define
\begin{equation*}
{\mathcal A}(l,m):=\Big\{ {\mathbf n} : \sum\limits_{j\ge 0} n_j=l, \; 
\sum\limits_{j\ge 1} j n_j=m \Big\}.  
\end{equation*}
The set \({\mathcal A}(l,m)\) is finite and \({\mathbf n} \in  {\mathcal A}(l,m)\) implies \(n_j=0\) for \(j\ge m+1\). Applying the Multinomial Theorem and extracting the coefficient of \(q^n\) on both sides of 
\eqref{eqn_w_k_fixed_point}, we obtain 
\begin{align}
\label{recursion_P_n}
P_n(k)
&= Q_n(k) \sum_{\substack{i\ge 1 \\ i(i+1)/2<n}}
 \bigg[ i! \sum\limits_{{\mathbf n} \in {\mathcal A}(i,n-i(i+1)/2)}
\alpha_i(k) 
\frac{(-1)^{n_0}}{n_0!} \prod\limits_{j\ge 1} 
\frac{\widehat P_j(k)^{n_j}}{n_j! Q_j(k)^{n_j}}\\
\nonumber
&+ 
(-1)^{i+1} (i+1)! \sum\limits_{{\mathbf n} \in {\mathcal A}(i+1,n-i(i+1)/2)}
\alpha_i(-k)^{-1} 
\frac{1}{n_0!} \prod\limits_{j\ge 1} 
\frac{P_j(k)^{n_j}}{n_j! Q_j(k)^{n_j}}\bigg].
\end{align}
Formulas \eqref{recursion_hat_P_n}
and \eqref{recursion_P_n} give a recursive procedure for calculating all polynomials \(P_n\) and \(\widehat P_n\) starting from \(P_1=\widehat P_1=1\). Indeed, assume that \(n\ge 2\) and we have computed all polynomials \(\{P_j, \widehat P_j\}_{1\le j \le n-1}\). We note that the indices \({\mathbf n}=\{n_j\}_{j\ge 0}\) appearing in the sets 
\({\mathcal A}(i,n-i(i+1)/2)\) and \( {\mathcal A}(i+1,n-i(i+1)/2)\) have \(n_j=0\) for \(j\ge n\), thus the right-hand side of \eqref{recursion_P_n} depends only on polynomials \(\{P_j, \widehat P_j\}_{1\le j \le n-1}\). This allows us to compute \(P_n\) via    \eqref{recursion_P_n} and then we can obtain \(\widehat P_n\) via \eqref{recursion_hat_P_n}. 
}
\end{remark}

\begin{figure}[t]
\centering
{\includegraphics[height =8.5cm]{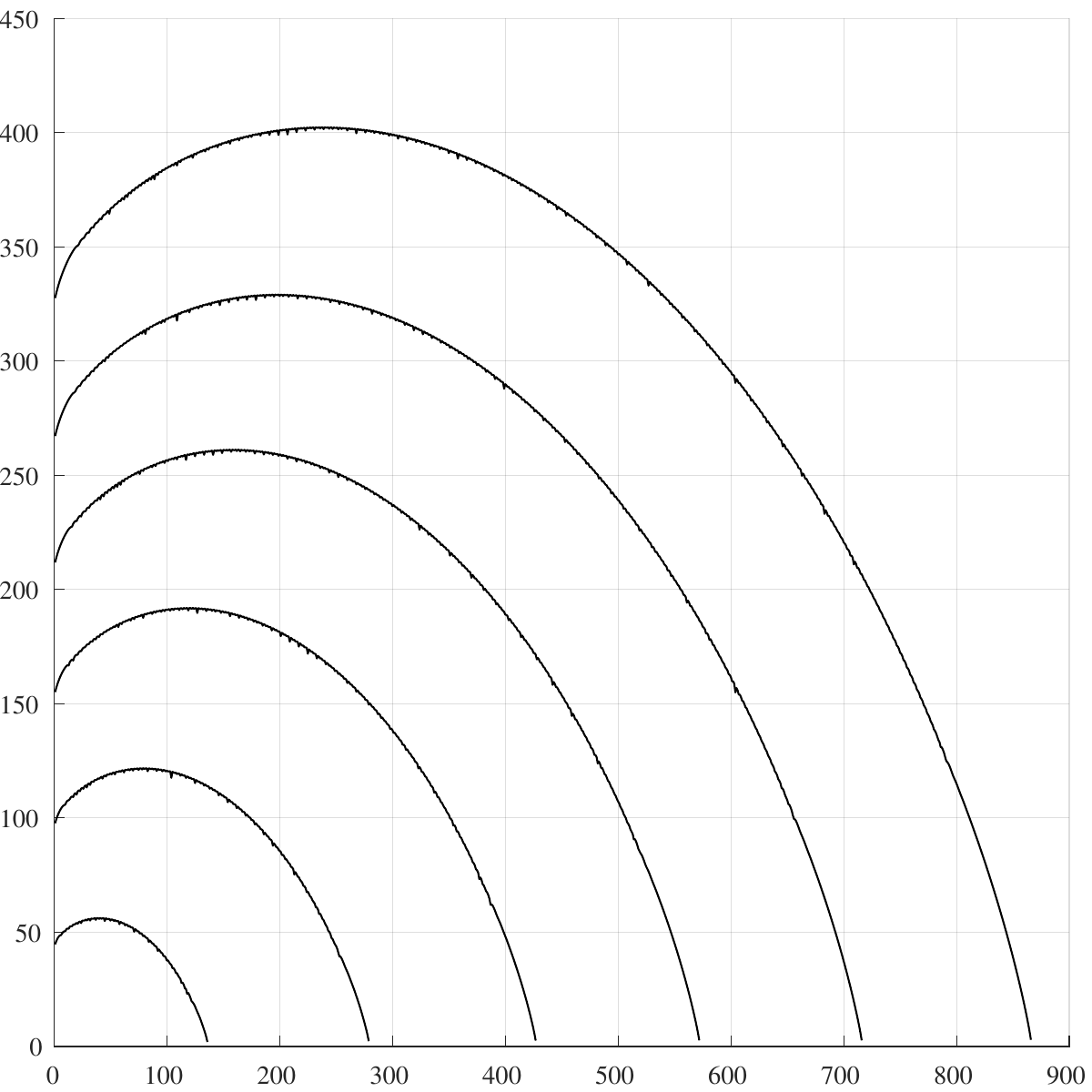}} 
\caption{The number of decimal digits of the coefficients of \(P_n\). For each value of \(n \in \{50,100,150,200,250,300\}\),  we plot \(\log_{10}|b_{n,i}|\) (on the \(y\)-axis) vs the index \(i=0,1,2,\dots,{\textnormal{deg}}(P_n)\) on the \(x\)-axis. }
\label{fig1}
\end{figure}

Using the algorithm based on \eqref{def_w^n}, that we described on page \pageref{our_algorithm}, we computed the coefficients of polynomials \(P_n\) and \(\widehat P_n\) for all \(1\le n\le 300\). These coefficients are available for download on  the author's webpage 
\url{https://kuznetsov.mathstats.yorku.ca/code/}. This calculation required high precision due to the rapid growth of the coefficients of \(P_n\) and \(\widehat P_n\). We used D. H. Bailey's arbitrary precision Fortran-90 package MPFUN2020 \cite{Bailey_2020}. The computation took approximately three weeks, running on a single core of a regular desktop computer having an Intel Core i5-10400 processor and 8GB of RAM.

 Figure \ref{fig1} provides insights into the magnitude  of the coefficients of polynomials \(P_n\) and \(\widehat P_n\). We write 
\[
P_n(k)=\sum\limits_{i=0}^{M_n-2} b_{n,i} k^i,
\]
where \(M_n\) is defined in \eqref{def_M_n}. The corresponding coefficients for \(\widehat P_n\) are denoted by \(\hat b_{n,i}\). 
The six graphs in Figure \ref{fig1} display  \(\log_{10}|b_{n,i}|\) for  \(n\in \{50,100,150,200,250,300\}\) and \(i=0,1,2,\dots,{\textnormal{deg}}(P_n)=M_n-2\), with \(i\) plotted along the \(x\)-axis. These values approximate the number of decimal digits required to represent  \(b_{n,i}\). We see from these graphs that the largest coefficients of \(P_{300}\) are on the order of \(10^{405}\).  The corresponding graphs showing the size of coefficients of \(\widehat P_n\) are very similar and we do not include them here.  Figure \ref{fig1} also demonstrates a rather regular distribution of the absolute value of the coefficients \(b_{n,i}\), across both 
 \(i\) and \(n\). This contrasts sharply 
 with the distribution of the signs of \(b_{n,i}\), which we discuss next. 

\begin{figure}[t!]
\centering
{\includegraphics[height =9.1cm]{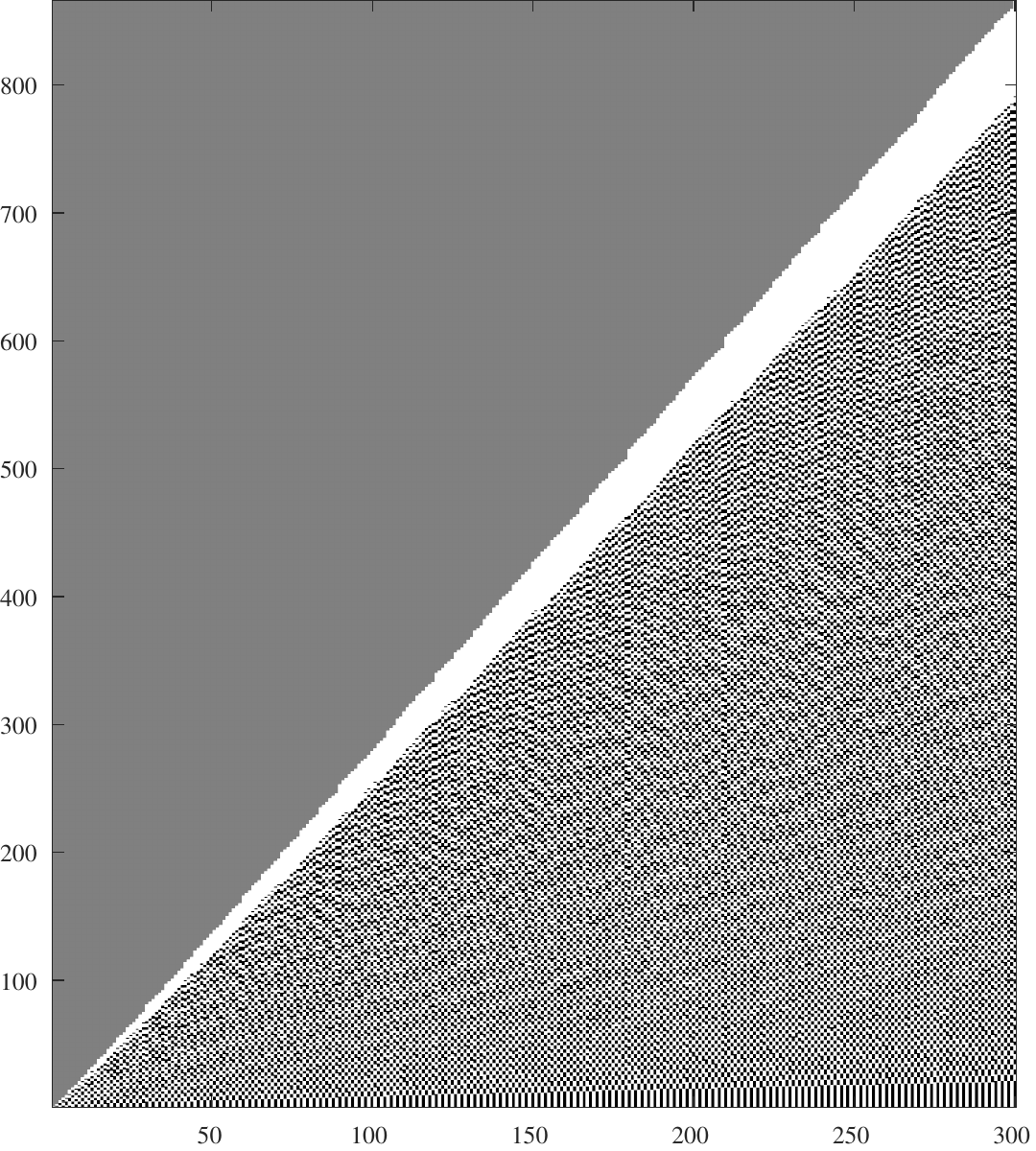}} 
{\includegraphics[height =9.1cm]{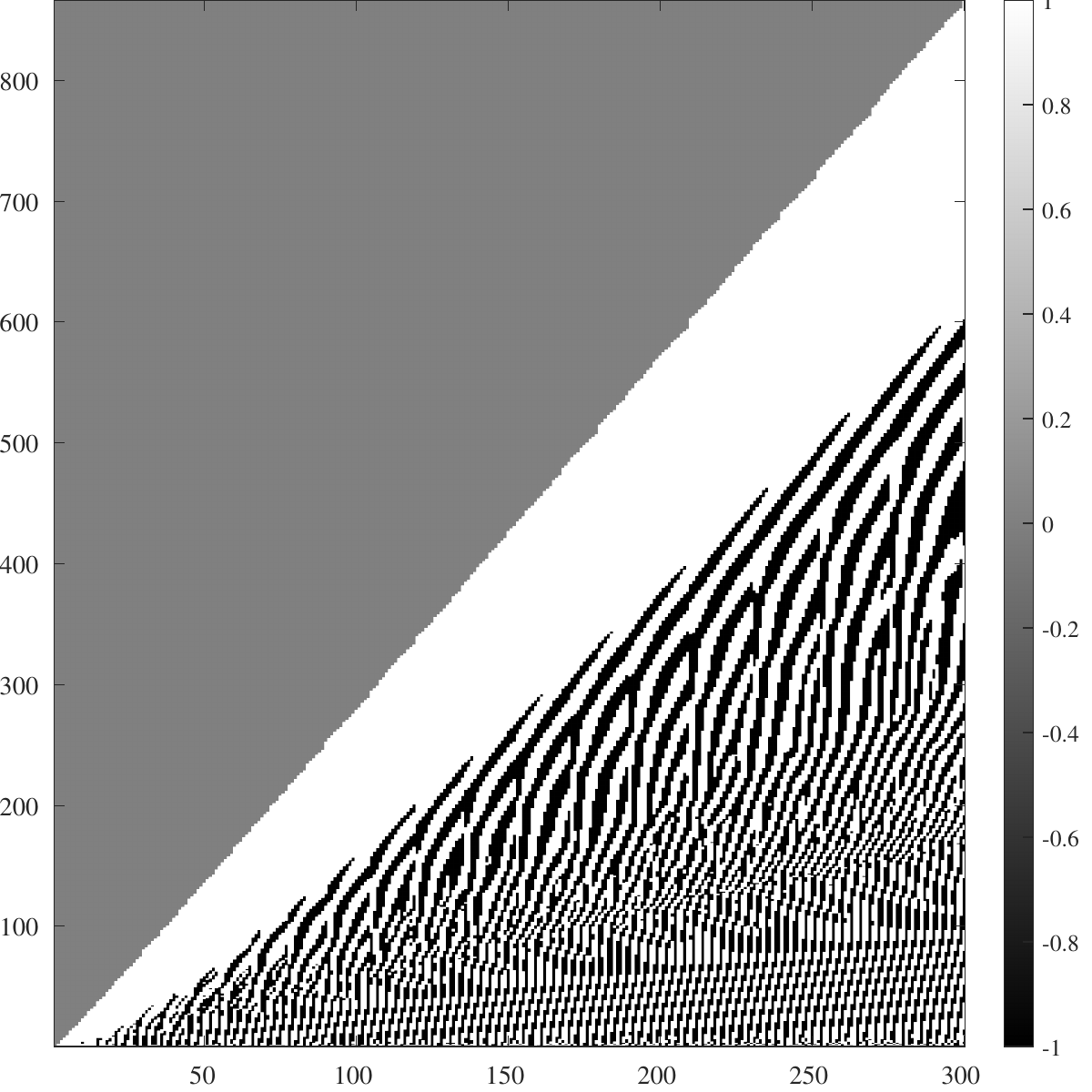}}
\caption{The sign of coefficients \(b_{n,i}\) (left) and \(B_{n,i}\) (right). Here \(n=1,2,\dots,300\) is on the \(x\)-axis and \(i=0,1,\dots,M_n-2={\textnormal{deg}}(P_n)\) is on the \(y\)-axis. } 
\label{fig2}
\end{figure}  

Figure \ref{fig2} presents the sign distribution of
 \(b_{n,i}\) (left) and \(B_{n,i}\) (right), where
\[
P_n(k+1)=\sum\limits_{i=0}^{M_n-2} B_{n,i} k^i. 
\]
Recall that this would be the form of polynomials if we started indexing zeros at \(k=0\) instead of \(k=1\), as discussed in
Remark \ref{remark_shifted}.  
The corresponding coefficients for \(\widehat P_n(k+1)\) will be denoted by \(\widehat B_{n,i}\).
In Figure \ref{fig2},  \(n=1,2,\dots,300\) is plotted along the \(x\)-axis and the \(i=0,1,\dots,M_n-2\) is along the \(y\)-axis. On the left graph, at each coordinate \((n,i)\) we plot a white pixel if \(b_{n,i}>0\) and a black pixel if  \(b_{n,i}>0\)
(if \(b_{n,i}=0\) we plot a gray pixel). The boundary of the gray region in the upper left of each plot represents the graph of \(M_n-2={\textnormal{deg}}(P_n)\). The right graph similarly shows the sign distribution of \(B_{n,i}\). 

We observe two interesting phenomena from Figure \ref{fig2}. First of all, both polynomials \(P_n(k)\) and \(P_n(k+1)\) have a large number of positive leading coefficients (that is, \(b_{n,i}>0\) and \(B_{n,i}>0\) for \(i\) close to \(\deg(P_n)=M_n-2\)).  Second, the signs of coefficients \(b_{n,i}\) and \(B_{n,i}\) have very different behavior. The sign of \(b_{n,i}\) follows a checkerboard pattern, with alternating cells of white/black color (though the cells are not all of the same size and the pattern is somewhat irregular). At the same time, the sign of \(B_{n,i}\) appears to follow a striped or ``zebra" pattern.

The corresponding graphs for the polynomials \(\widehat P_n(k)\) and \(\widehat P_n(k+1)\) are shown on Figure \ref{fig3}. The qualitative behavior is consistent with Figure \ref{fig2}, but two distinctions are apparent. First of all, the white region of positive leading coefficients is narrower, indicating that fewer leading coefficients \(\hat b_{n,i}\) and \(\widehat  B_{n,i}\) are positive. Second, we see sporadic black dots (negative coefficients) in the white region of leading coefficients. These will have implications for confirming numerically Sokal's Conjecture 3 (that is, checking that \(\widehat P_n(k)\ge 0\) for all \(k\ge 1\)), to which we now turn our attention.

 Since the leading coefficients of \(P_n\) and \(\widehat P_n\) are positive, it is clear that for every \(n\) we have \(P_n(k)\ge 0\) and \(\widehat P_n(k)\ge 0\)  for all \(k\) large enough. The following theorem provides an explicit upper bound for the largest real root of a polynomial.

\begin{theorem}[\bf Lagrange]
Let \(P(x)=\sum\limits_{j=0}^n a_j x^j\) be a real polynomial of degree \(n\) with \(a_n>0\). Define \(\nu(P)\) as the sum of the largest and the second largest numbers in the set
\[
\Big\{ \sqrt[i]{|a_{n-i}|/a_n} :
1\le i \le n \; {\textnormal{ and }} \;
a_{n-i}<0
\Big\}. 
\]
Then \(P(x)> 0\) for all
\(x > \nu(P)\). 
\end{theorem}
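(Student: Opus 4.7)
My strategy is to reduce positivity of $P$ to a single scalar inequality and then invoke the binomial theorem. For $x>0$, dropping every non-negative term of $P$ yields
\[
P(x)\;\ge\;a_n x^n-\sum_{i\in S}|a_{n-i}|\,x^{n-i}\;=\;a_n x^n\Bigl(1-\sum_{i\in S}(B_i/x)^i\Bigr),
\]
where $S:=\{1\le i\le n:a_{n-i}<0\}$ and $B_i:=\sqrt[i]{|a_{n-i}|/a_n}$. Since $a_n>0$, it suffices to prove that $\sum_{i\in S}(B_i/x)^i<1$ whenever $x>\nu(P)$. The degenerate cases $|S|\le 1$ reduce to the elementary bound $x>B_{i_1}$ for the unique negative term and are handled directly; I focus on $|S|\ge 2$.

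Pick $i_1\in S$ attaining the maximum, so $B_{i_1}=b_{(1)}$ and $B_i\le b_{(2)}$ for every $i\in S\setminus\{i_1\}$. Normalize by $\alpha:=b_{(1)}/\nu(P)$ and $\beta:=b_{(2)}/\nu(P)$, so that $\alpha+\beta=1$ and $0\le\beta\le\alpha<1$. Because $x\mapsto(B_i/x)^i$ is strictly decreasing in $x$ for each $i\in S$ (since $B_i>0$), it suffices to prove the weak inequality $\sum_{i\in S}(B_i/\nu(P))^i\le 1$. Bounding each non-maximal term by $\beta^i$ and completing the remaining finite sum to a full infinite geometric series gives
\[
\sum_{i\in S}(B_i/\nu(P))^i\;\le\;\alpha^{i_1}+\sum_{i\in S\setminus\{i_1\}}\beta^i\;\le\;\alpha^{i_1}+\sum_{\substack{i\ge 1\\ i\ne i_1}}\beta^i\;=\;\alpha^{i_1}-\beta^{i_1}+\frac{\beta}{\alpha}.
\]

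What remains is the algebraic claim $\alpha^{i_1}-\beta^{i_1}+\beta/\alpha\le 1$ whenever $\alpha+\beta=1$ and $i_1\ge 1$. Multiplying by $\alpha$ and rearranging turns this into $\alpha(\alpha^{i_1}-\beta^{i_1})\le\alpha-\beta$, which in the case $\alpha>\beta$, after the factorization $\alpha^{i_1}-\beta^{i_1}=(\alpha-\beta)\sum_{j=0}^{i_1-1}\alpha^{i_1-1-j}\beta^j$, becomes
\[
\sum_{j=0}^{i_1-1}\alpha^{i_1-j}\beta^j\;\le\;1\;=\;(\alpha+\beta)^{i_1}.
\]
This is immediate from the binomial theorem: $(\alpha+\beta)^{i_1}=\sum_{j=0}^{i_1}\binom{i_1}{j}\alpha^{i_1-j}\beta^j$, and all coefficients $\binom{i_1}{j}$ are at least one while the extra $j=i_1$ term $\beta^{i_1}\ge 0$ only adds slack. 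The edge case $\alpha=\beta$ makes both sides of $\alpha(\alpha^{i_1}-\beta^{i_1})\le\alpha-\beta$ vanish.

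The step I expect to be the main obstacle is finding the right estimate for the tail $\sum_{i\in S\setminus\{i_1\}}\beta^i$: the naive substitution $B_i\le b_{(1)}$ for every $i\in S$ recovers only Cauchy's weaker bound $x>2b_{(1)}$, while keeping the sum over its actual finite index set produces expressions that are tight only when $b_{(2)}\ll b_{(1)}$. The trick of extending the tail to the full infinite geometric series $\sum_{i\ge 1,\,i\ne i_1}\beta^i=\beta/\alpha-\beta^{i_1}$ is precisely what aligns the resulting upper bound with the binomial identity above and thus yields the sharp Lagrange bound $b_{(1)}+b_{(2)}$.
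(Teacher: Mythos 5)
The paper states the Lagrange bound as a classical result without giving a proof, so there is no in-paper argument to compare against. Your proof is correct as written. The reduction to the inequality $\sum_{i\in S}(B_i/\nu(P))^i\le 1$ at $x=\nu(P)$, followed by splitting off the index $i_1$ achieving $b_{(1)}$, replacing the remaining $B_i$ by $b_{(2)}$, and majorizing the finite tail by the full geometric series $\sum_{i\ge 1,\,i\ne i_1}\beta^i=\beta/\alpha-\beta^{i_1}$, is exactly the standard device that sharpens Cauchy's $x>2b_{(1)}$ to Lagrange's $x>b_{(1)}+b_{(2)}$; and the final algebraic step $\sum_{j=0}^{i_1-1}\alpha^{i_1-j}\beta^j\le(\alpha+\beta)^{i_1}$ via the binomial theorem is clean and complete, with the degenerate cases ($|S|\le1$, $\alpha=\beta$) handled. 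One small point worth a remark: when the maximum value $b_{(1)}$ is attained by more than one index $i$, your argument silently uses the multiset convention ($b_{(2)}=b_{(1)}$), which is the safe reading; the set-theoretic reading would give a formally stronger statement that your argument does not immediately cover, but this is a convention issue rather than a gap, and the multiset version is what the paper needs and what is usually meant.
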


\begin{figure}[t!]
\centering
{\includegraphics[height =9.1cm]{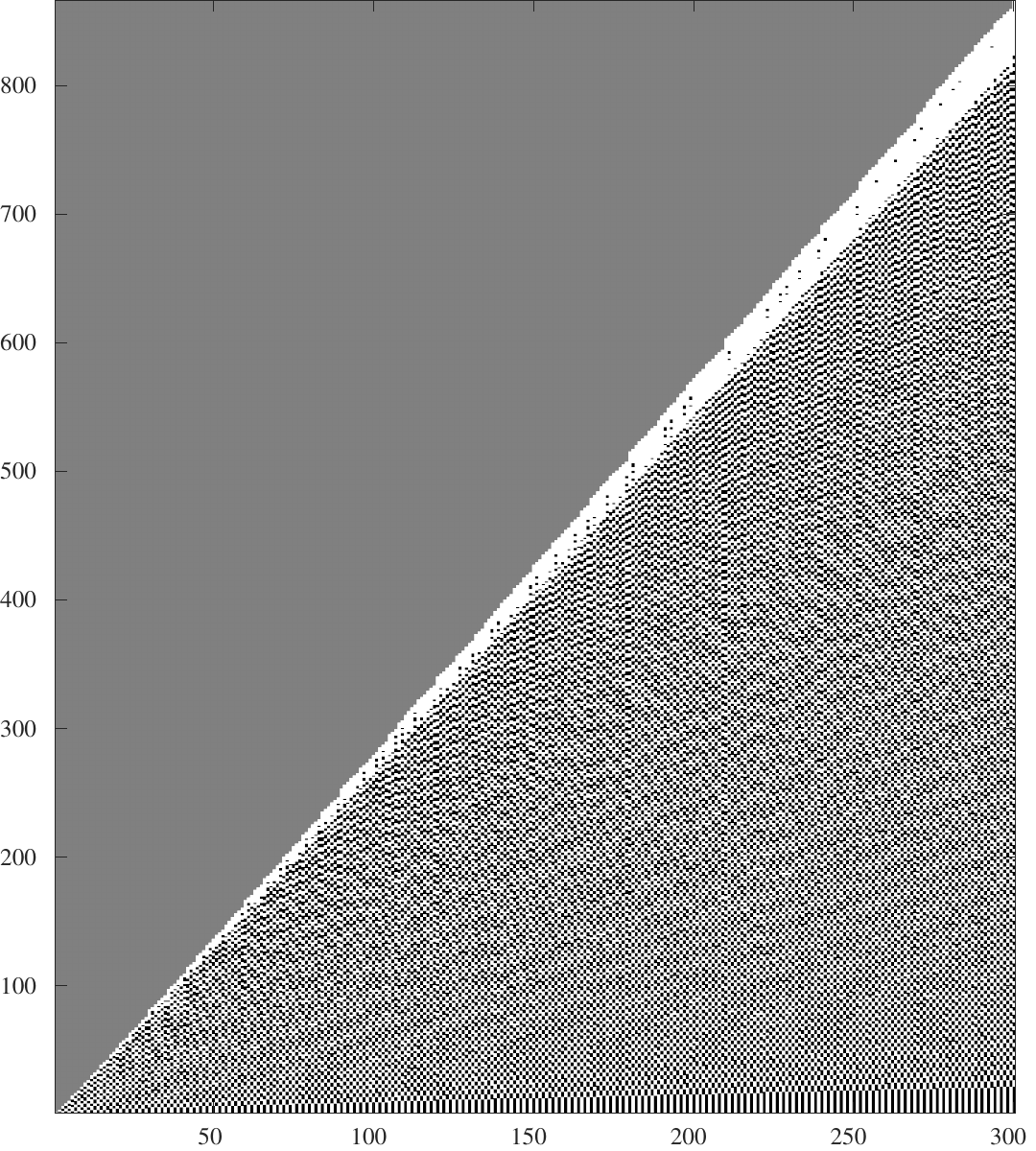}} 
{\includegraphics[height =9.1cm]{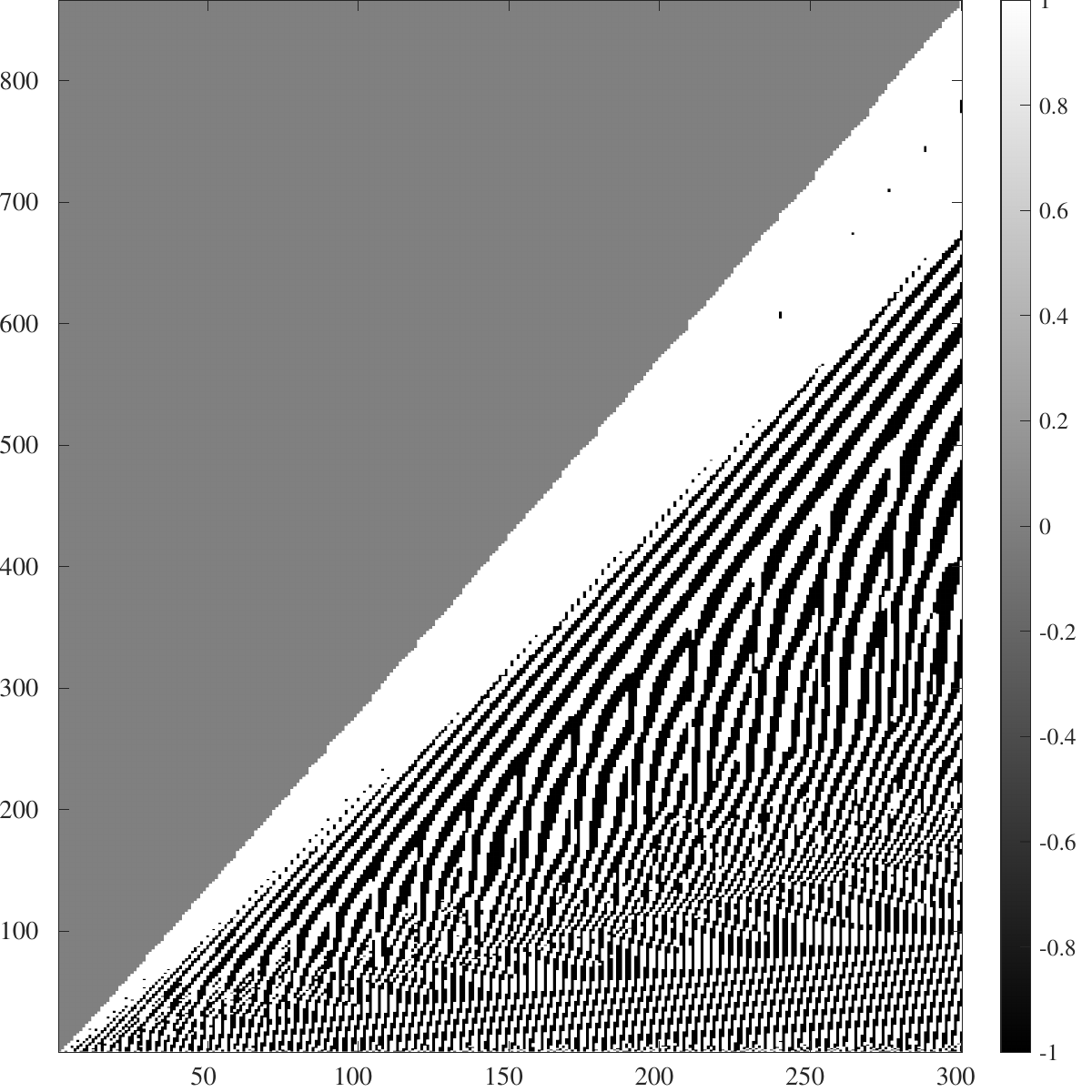}}
\caption{The sign of coefficients \(\hat b_{n,i}\) (left) and \(\widehat B_{n,i}\) (right). Here \(n=1,2,\dots,300\) is on the \(x\)-axis and \(i=0,1,\dots,M_n-2={\textnormal{deg}}(P_n)\) is on the \(y\)-axis. } 
\label{fig3}
\end{figure}

The next result provides algernative bounds on the largest real root of a polynomial. These bounds are particularly useful for reducing the computational effort required for verifying the non-negativity of \(\nu(P_n)\) and \(\nu(\widehat P_n)\).

\begin{proposition}\label{prop_bound}
Let \(P(x)=\sum\limits_{j=0}^n a_j x^j\) be a real polynomial of degree \(n\) such that \(a_j \ge 0\) for \(m \le j\le n\). For \(r>0\) denote  \(\nu_r(P):=\max\{ r, 2c(r)\}\), where
\[
c(r):=\max\limits_{1\le i \le m} \sqrt[i]{\frac{|a_{m-i}|}{b(r)}}  
\;\; {\textnormal{ and }} \; 
b(r):=\sum\limits_{j=m}^n a_j r^{j-m}. 
\]
Then  \(P(x) > 0\) for all \(x > \nu_r(P)\).  
\end{proposition}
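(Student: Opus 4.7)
The plan is to split $P(x)$ into the ``good'' tail $S_+(x) := \sum_{j=m}^n a_j x^j$ (whose coefficients are all non-negative) and the ``bad'' head $S_-(x) := \sum_{j=0}^{m-1} a_j x^j$, and to show that for $x > \nu_r(P)$ the tail dominates the head. The key observation driving the choice of $\nu_r(P)$ is that the definition is set up precisely so that $2c(r)$ forces each term of the head to be strictly less than a geometric fraction of $b(r)x^m$.

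First I would handle $S_+(x)$ using the hypothesis $x > r$: since $a_j \ge 0$ for $j \ge m$ and $x^{j-m} \ge r^{j-m}$ for all $j \ge m$, one gets
\[
S_+(x) \;=\; x^m \sum_{j=m}^n a_j x^{j-m} \;\ge\; x^m \sum_{j=m}^n a_j r^{j-m} \;=\; b(r)\, x^m.
\]
Next I would bound the head by the triangle inequality after re-indexing via $i = m-j$:
\[
|S_-(x)| \;\le\; \sum_{j=0}^{m-1} |a_j|\, x^j \;=\; \sum_{i=1}^{m} |a_{m-i}|\, x^{m-i} \;=\; x^m \sum_{i=1}^m |a_{m-i}|\, x^{-i}.
\]

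The main (and only) substantive step is now to estimate $\sum_{i=1}^m |a_{m-i}| x^{-i}$ using the hypothesis $x > 2c(r)$. For every index $i$ with $a_{m-i}\neq 0$, the definition of $c(r)$ gives $(|a_{m-i}|/b(r))^{1/i} \le c(r) < x/2$, hence $|a_{m-i}| < b(r)(x/2)^i$, which rearranges to
\[
|a_{m-i}|\, x^{-i} \;<\; \frac{b(r)}{2^i}.
\]
Indices with $a_{m-i}=0$ contribute nothing, so summing over $1 \le i \le m$ yields $\sum_{i=1}^m |a_{m-i}| x^{-i} < b(r) \sum_{i=1}^m 2^{-i} < b(r)$. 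Combining the two bounds,
\[
P(x) \;\ge\; S_+(x) - |S_-(x)| \;>\; b(r)\, x^m - b(r)\, x^m \;=\; 0,
\]
which is the claim.

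I expect essentially no obstacle; the proof is a clean ``geometric-series majorant'' argument, and the statement has been designed so that the constants line up. The only minor subtleties are: (i) handling the edge case where all $a_{m-i}$ vanish (then $|S_-(x)| = 0$ and the claim reduces to $S_+(x) > 0$, which follows from $a_n > 0$ since $\deg P = n$), and (ii) keeping track that the inequality $\sum_{i=1}^m 2^{-i} = 1 - 2^{-m} < 1$ is strict, which is what yields the strict positivity $P(x) > 0$ rather than merely $P(x) \ge 0$.
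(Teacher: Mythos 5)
Your proof is correct and is essentially the paper's argument recast in direct form: both bound the head $\sum_{j<m} a_j x^j$ by $b(r)\,x^m$ via the definition of $c(r)$ and a geometric-series comparison. The paper argues contrapositively from a hypothetical root $z \ge r$ and deduces $z \le 2c(r)$ from $1 \le \sum_{i=1}^m c(r)^i z^{-i} \le \frac{c(r)z^{-1}}{1-c(r)z^{-1}}$, whereas you bound $P(x)$ from below directly; your use of the finite sum $\sum_{i=1}^m 2^{-i} < 1$ also quietly sidesteps the convergence condition $c(r)z^{-1} < 1$ that the paper's infinite-series majorant implicitly needs.
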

\begin{proof}
Assume that \(z  \in [r,\infty)\) and \(P(z)=0\). Then 
\[
\sum\limits_{j=m}^n a_j z^j =  - \sum\limits_{j=0}^{m-1} a_j z^j.  
\]
Since \(z \ge r\) and \(a_j\ge 0\) for \(m\le j \le n\), we have 
\[
z^m b(r) =z^m \sum\limits_{j=m}^n a_j r^{j-m} \le \sum\limits_{j=m}^n a_j z^j =
\Big \vert - \sum\limits_{j=0}^{m-1} a_j z^j \Big \vert 
\le \sum\limits_{j=0}^{m-1} |a_j| z^j,
\]
which implies
\[
1 \le \sum\limits_{j=0}^{m-1} \frac{|a_j|}{b(r)} z^{j-m}
=\sum\limits_{i=1}^{m} \frac{|a_{m-i}|}{b(r)} z^{-i}
\le \sum\limits_{i=1}^{m} c(r)^i z^{-i} 
\le \frac{c(r) z^{-1}}{1-c(r) z^{-1}}.
\]
From this last inequality it follows that \(c(r) z^{-1} \ge 1/2\), so that \(z \le 2 c(r)\). Thus we proved that every root \(z\) of the polynomial \(P\) in the interval \([r,\infty)\) must satisfy \(z\le 2c(r)\), which is equivalent to saying that \(P\) has no roots \(z\) in the interval \(z> \max\{ r, 2c(r)\}\). This fact, together with the observation that  \(P(x) \to +\infty\) as \(x\to +\infty\), implies the statement of this result.  
\end{proof}

\begin{figure}[t!]
\centering
{\includegraphics[height =8.5cm]{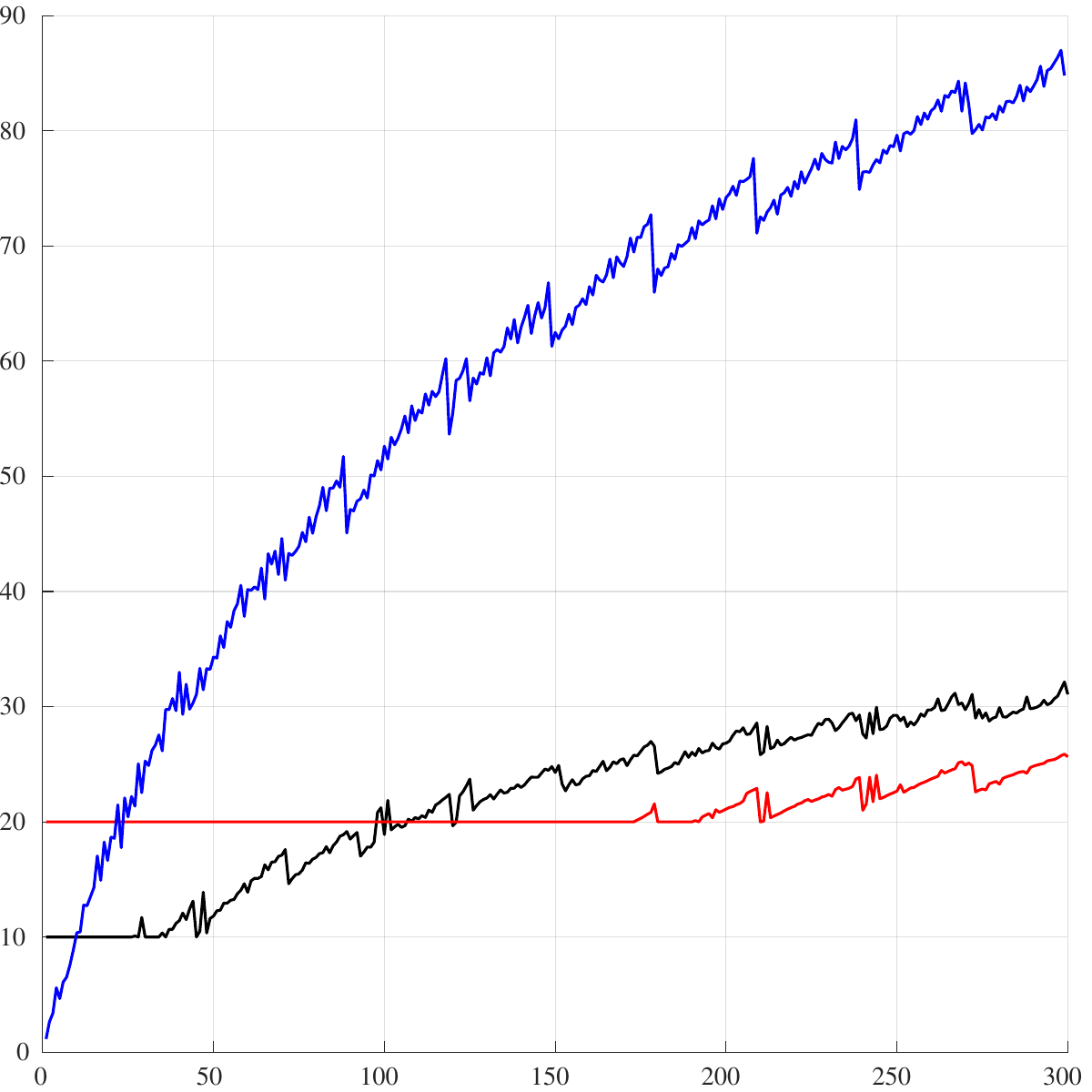}} 
{\includegraphics[height =8.5cm]{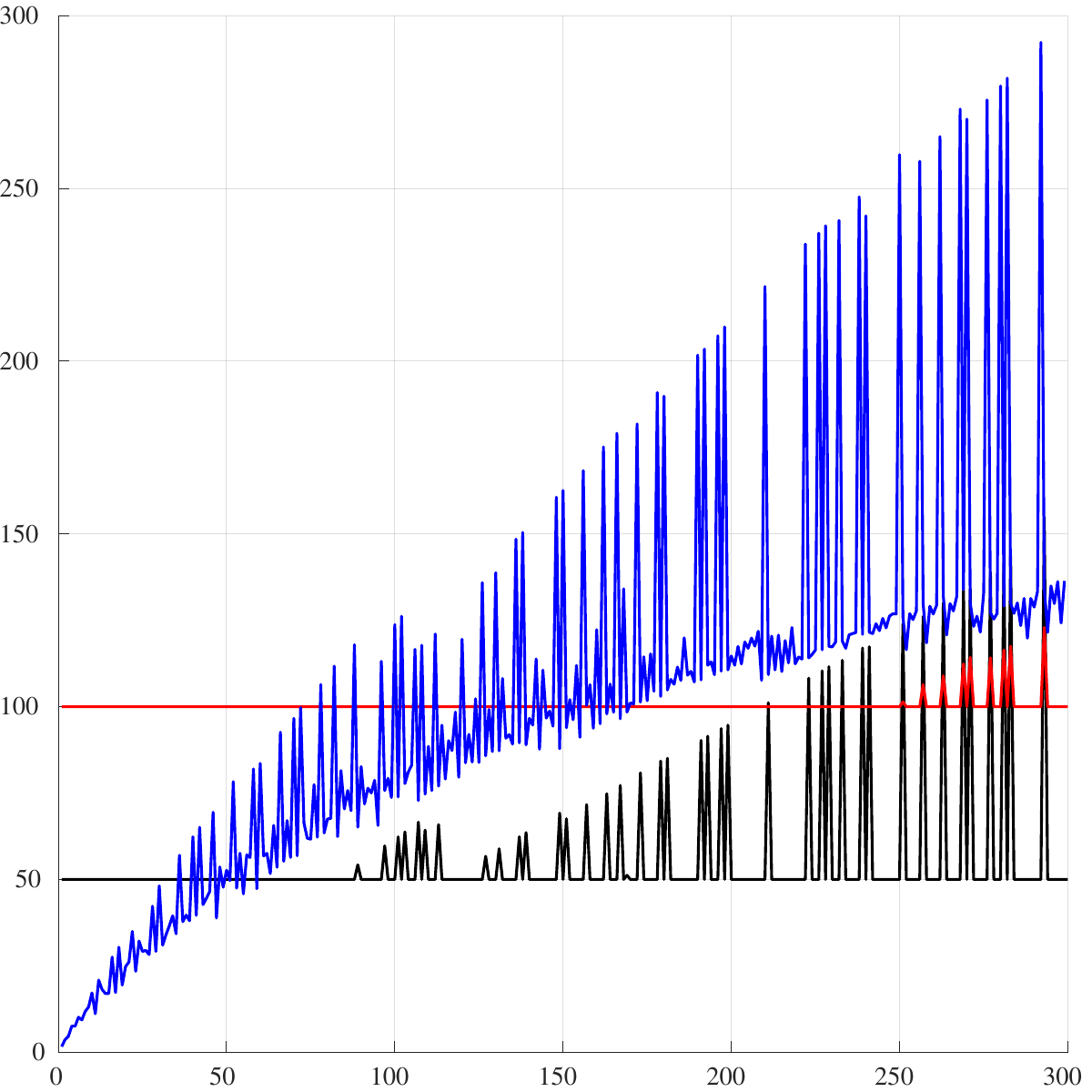}}
\caption{The left plot shows the values of \(\nu(P_n)\) (blue), \(\nu_{10}(P_n)\) (black) and \(\nu_{20}(P_n)\) (red) for \(n=1,2,\dots,300\). The right plot shows the corresponding values of 
\(\nu(\widehat P_n)\), \(\nu_{50}(\widehat P_n)\)  and \(\nu_{100}( \widehat P_n)\).} 
\label{fig4}
\end{figure}

The left plot in Figure \ref{fig4}  shows the Lagrange bounds \(\nu(P_n)\) in blue color, as well as the bounds  \(\nu_r(P_n)\) for \(r=10\) (respectively, \(r=20\)) in black (respectively, red) color. Similarly, the right plot in Figure \ref{fig4} shows the same bounds for polynomials \(\widehat P_n\), though now we take \(r=50\) and \(r=100\). The \(x\)-axis in both plots corresponds to \(n=1,2,\dots,300\).  The sharper bounds \(\nu_r(P_n)\) and \(\nu_r(\widehat P_n)\) allow for more efficient verification of the non-negativity of  \(P_n(k)\) and \(\widehat P_n(k)\). For example, to confirm that \(P_{199}(k)\ge 0\) for all \(k\ge 1\) it is enough to check numerically that \(P_{199}(k) \ge 0\) for \(k=1,2,\dots,20\). Indeed, we compute \(\nu_{20}(P_{199})\approx 20.96\) (see the red graph on the left plot in Figure \ref{fig4}), thus Proposition \ref{prop_bound} guarantees that \(P_{199}(k)>0\) for all \(k\ge 21\). 
In contrast, verifying the non-negativity of \(\widehat P_{199}(k)\) 
requires more work.  Here \(\nu_{50}(\widehat P_{199})\approx 94.67\), thus we must confirm numerically that  \(\widehat P_{199}(k)\ge 0\) for \(k=1,2,\dots,94\) to ensure  non-negativity  for all \(k\ge 1\). This difference arises because \(P_{199}\) has 53 positive leading coefficients, whereas \(\widehat P_{199}\) has only 11 (this is the consequence of the sporadic black dots on the right graph in Figure \ref{fig3}). As a result,  \(\nu_{r}(\widehat P_{199})\) is significantly larger than the corresponding bound for \(\nu_{r}(P_{199})\). 
This is not unexpected, as the non-negativity of \(\widehat P_n(k)\) for all \(n,k\ge 1\)  implies the non-negativity of \(P_n(k)\) for all \(n,k \ge 1\) (Sokal's Conjecture 3 is stronger than Conjecture 2, see page \pageref{Sokal_conjectures}). Consequently, verifying the former result is inherently more challenging.

\begin{figure}[t!]
\centering
{\includegraphics[height =8.5cm]{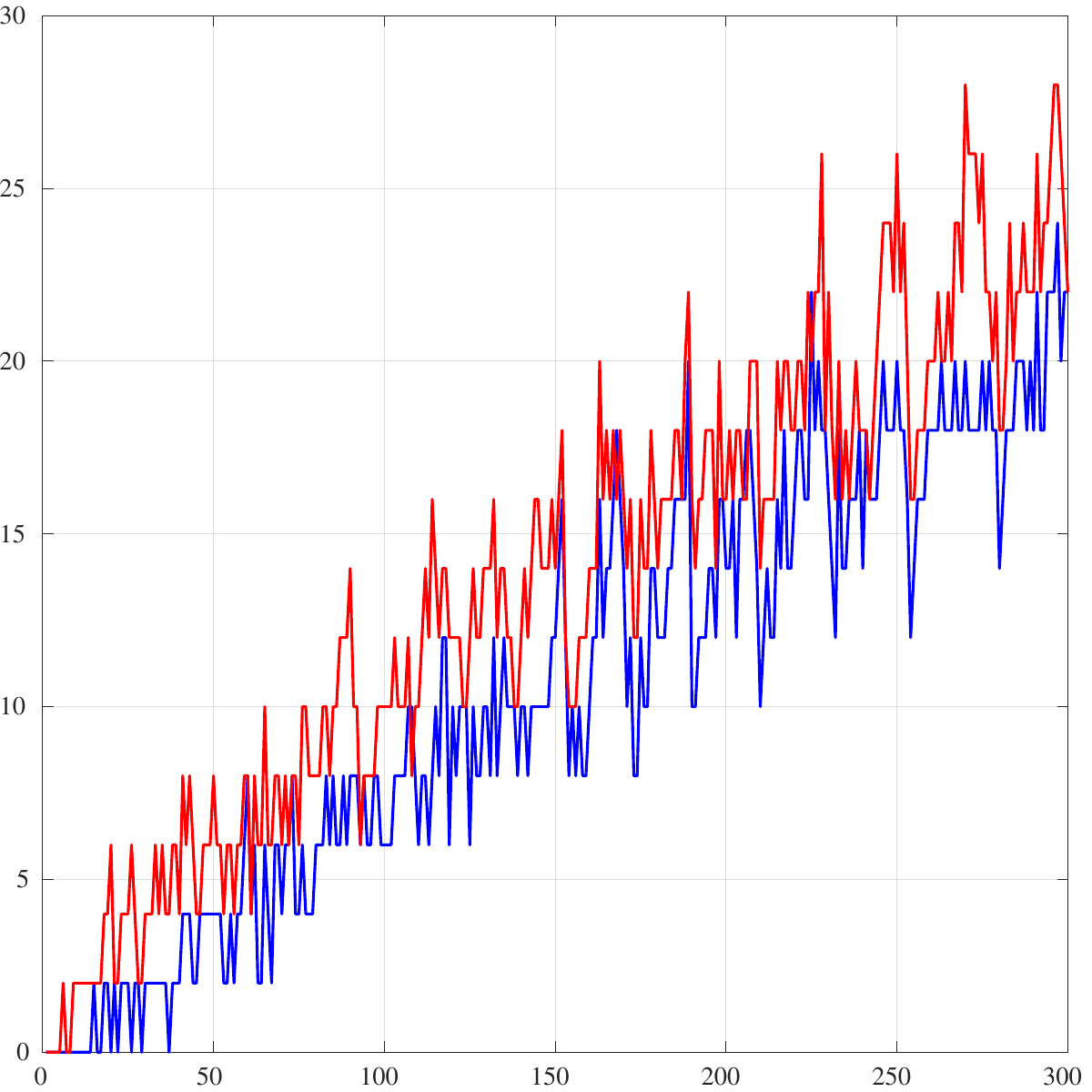}} 
{\includegraphics[height =8.5cm]{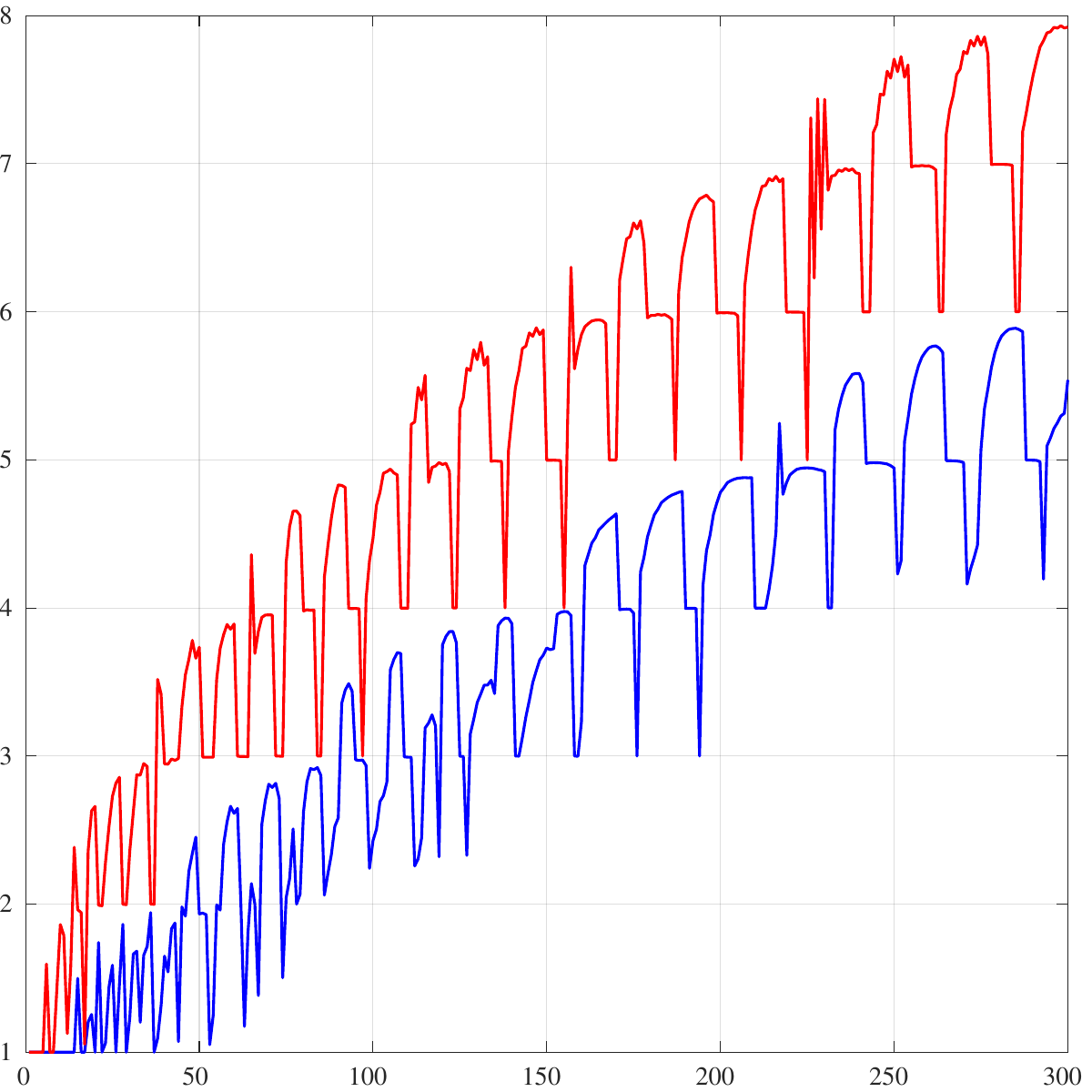}}
\caption{The left plot shows the number of real roots of  \(P_n(k)\) (blue) and \(\widehat P_n(k)\) (red) in the interval \(k \in (1,\infty)\). The right plot shows the largest positive root of \(P_n\) (blue) and \(\widehat P_n\) (red). In both plots \(n=1,2,\dots,300\) is on the \(x\)-axis.} 
\label{fig5}
\end{figure}

While \(P_n(k)\) and \(\widehat P_n(k)\) are non-negative for all \(k \in {\mathbb N}\) and $n\le 300$, a natural question is whether these polynomials remain positive for all real \(k \in (1,\infty)\)? The answer, however, is negative. Sokal \cite{Sokal_2009} observed that \(P_{15}(k)\) and \(\widehat P_6(k)\) take negative values for certain \(k>1\). We extended this analysis and computed all real roots of these polynomials that lie in the interval \(k\in (1,\infty)\). The left plot of Figure \ref{fig5} shows the number of real zeros of polynomials \(P_n(k)\) and \(\widehat P_n(k)\) on the interval \((1,\infty)\). Our computations show that starting at \(n=38\) (respectively, \(n=9\)), all polynomials \(P_n(k)\) (respectively, \(\widehat P_n(k)\)) have real roots in the interval \((1,\infty)\) and that the number of these roots tends to grow with increasing values of \(n\) (though irregularly and non-monotonically).  Interestingly, the roots tend to cluster near integers. For example,  \(\widehat P_{62}(k)\) has six real roots in the interval \((1,\infty)\) given by 
\begin{align*}
k_1&=1.000000007118404861\dots,\\
k_2&=1.000217000565656498\dots,\\
k_3&=1.143039863324097272\dots,\\
k_4&=1.999839230312833487\dots,\\
k_5&=2.484119765012083019\dots,\\
k_6&=2.991499691005777341\dots, \\
\end{align*}
with four roots close to integers. The right plot of Figure \ref{fig5} shows the graph of the largest positive root of polynomial \(P_n\) and \(\widehat P_n\) in the interval \((1,\infty)\), where  \(n=1,2\dots,300\) is on the \(x\)-axis. Again, we observe that the largest roots tend to grow with increasing \(n\) and often lie close to integers.

\section*{Acknowledgements}

The research was supported by the Natural Sciences and Engineering Research Council of Canada. The author is grateful to Alan Sokal for generously sharing unpublished results and for providing many valuable comments and suggestions on earlier drafts of this paper.


\begin{thebibliography}{10}

\bibitem{NIST}
{NIST Digital Library of Mathematical Functions}.
\newblock {\em \url{https://dlmf.nist.gov/}, {Release 1.2.2 of 2024-09-15, F.
  W. J. Olver, A. B. Olde Daalhuis, D. W. Lozier, B. I. Schneider, R. F.
  Boisvert, C. W. Clark, B. R. Miller, B. V. Saunders, H. S. Cohl, and M. A.
  McClain, eds.}}

\bibitem{Bailey_2020}
D.~H. Bailey.
\newblock {MPFUN2020: A thread-safe arbitrary precision package with special
  functions}.
\newblock 2020.
\newblock \url{https://www.davidhbailey.com/dhbsoftware/}.

\bibitem{Eremenko_2007}
A.~Eremenko and I.~Ostrovskii.
\newblock {On the ‘pits effect’ of Littlewood and Offord}.
\newblock {\em Bulletin of the London Mathematical Society}, 39(6):929--939,
  2007.
\newblock \url{https://doi.org/10.1112/blms/bdm079}.

\bibitem{Gessel_Sagan_1996}
I.~M. Gessel and B.~E. Sagan.
\newblock The {T}utte polynomial of a graph, depth-first search, and simplicial
  complex partitions.
\newblock {\em The Electronic Journal of Combinatorics}, 3(2):R9, 1996.
\newblock \url{https://doi.org/10.37236/1267}.

\bibitem{Iserles_1993}
A.~Iserles.
\newblock On the generalized pantograph functional-differential equation.
\newblock {\em European Journal of Applied Mathematics}, 4(1):1--38, 1993.
\newblock \url{https://doi.org/10.1017/S0956792500000966}.

\bibitem{Kim_2015}
D.~Kim, A.~Bayad, and N.~Y. Ikikardes.
\newblock {Certain combinatoric convolution sums and their relations to
  Bernoulli and Euler polynomials}.
\newblock {\em J. Korean Math. Soc.}, 52(537-565), 2015.
\newblock \url{https://doi.org/10.4134/JKMS.2015.52.3.537}.

\bibitem{Langley_2000}
J.~Langley.
\newblock A certain functional--differential equation.
\newblock {\em Journal of Mathematical Analysis and Applications},
  244(2):564--567, 2000.
\newblock \url{https://doi.org/10.1006/jmaa.2000.6731}.

\bibitem{Liu_1998}
Y.~Liu.
\newblock On some conjectures by {M}orris et al. about zeros of an entire
  function.
\newblock {\em Journal of Mathematical Analysis and Applications}, 226(1):1--5,
  1998.
\newblock \url{https://doi.org/10.1006/jmaa.1998.6054}.

\bibitem{Mallows_1968}
C.~L. Mallows and J.~Riordan.
\newblock {The inversion enumerator for labeled trees}.
\newblock {\em Bulletin of the American Mathematical Society}, 74(1):92 -- 94,
  1968.

\bibitem{Morris_1972}
G.~R. Morris, A.~Feldstein, and E.~W. Bowen.
\newblock The {Phragm\'en-Lindel\"of} principle and a class of functional
  differential equations.
\newblock In L.~Weiss, editor, {\em Ordinary Differential Equations}, pages
  513--540. Academic Press, 1972.
\newblock \url{https://doi.org/10.1016/B978-0-12-743650-0.50048-4}.

\bibitem{Ramanujan_1916}
S.~Ramanujan.
\newblock On certain arithmetical functions.
\newblock {\em Trans. Cambridge Phil. Soc.}, 22:159--184, 1916.

\bibitem{Scott_Sokal_2005}
A.~D. Scott and A.~D. Sokal.
\newblock {The repulsive lattice gas, the independent-set polynomial, and the
  Lov\'asz local lemma}.
\newblock {\em Journal of Statistical Physics}, 118:1151--1261, 2005.
\newblock \url{https://doi.org/10.1007/s10955-004-2055-4}.

\bibitem{Scott_Sokal_2009}
A.~D. Scott and A.~D. Sokal.
\newblock {Some variants of the exponential formula, with application to the
  multivariate Tutte polynomial (alias Potts model)}.
\newblock {\em S\'emin. Lothar. Comb.}, 61A:article 61Ae, 2009.

\bibitem{Sokal_2009b}
A.~D. Sokal.
\newblock {A ridiculously simple and explicit Imlpicit Function Theorem}.
\newblock {\em S\'emin. Lothar. Comb.}, 61A:article B61Ad, 2009.

\bibitem{Sokal_2009}
A.~D. Sokal.
\newblock Some wonderful conjectures (but almost no theorems) at the boundary
  between analysis, combinatorics and probability.
\newblock 2009.
\newblock \url{http://ipht.cea.fr/statcomb2009/misc/Sokal_20091109.pdf}.

\bibitem{Sokal_2016}
A.~D. Sokal.
\newblock Pellet’s theorem and the separation of zeros in modulus for
  polynomials and entire functions.
\newblock {\em Unpublished manuscript}, 2016.

\bibitem{Sokal_2024}
A.~D. Sokal.
\newblock Some conjectures concerning the zeros of the deformed exponential
  function.
\newblock 2024.
\newblock
  \url{https://www.icms.org.uk/sites/default/files/downloads/sokal.pdf}.

\bibitem{Wang_2018}
L.~Wang and C.~Zhang.
\newblock Zeros of the deformed exponential function.
\newblock {\em Advances in Mathematics}, 332:311--348, 2018.
\newblock \url{https://doi.org/10.1016/j.aim.2018.05.006}.

\bibitem{Zhang_2016}
C.~Zhang.
\newblock An asymptotic formula for the zeros of the deformed exponential
  function.
\newblock {\em Journal of Mathematical Analysis and Applications},
  441(2):565--573, 2016.
\newblock \url{https://doi.org/10.1016/j.jmaa.2016.04.027}.

\end{thebibliography}

\appendix

\setcounter{equation}{0}
\renewcommand{\theequation}{\Alph{section}.\arabic{equation}}
\section{Polynomials \(P_n\) and \(\widehat P_n\) for \(n=1,\dots,10\)}\label{AppendixB}
Here we present the first ten polynomials \(P_n(k)\) and \(\widehat P_n(k)\). The coefficients of these polynomials for all \(n\le 300\) can be downloaded from 
\url{https://kuznetsov.mathstats.yorku.ca/code/}. 
  \small
  \begingroup
\addtolength{\jot}{-0.2em}
\begin{align*}
P_1(k)&=1, \\
P_2(k)&=3k^2 - 1, \\
 P_3(k)&=4k^5 + 8k^4 - 7k^3 - 4k^2 + 6k + 4, \\
 P_4(k)&=7k^7 + 14k^6 - 20k^5 + 34k^3 + 14k^2 - 17k - 10, \\
 P_5(k)&=6k^9 + 12k^8 - 47k^7 + 56k^6 + 134k^5 - 4k^4 - 147k^3 - 50k^2 + 54k + 28, \\
P_6(k)&=12 k^{13} + 84k^{12} + 107k^{11} - 151k^{10} + 508k^9 + 2064k^8 - 267k^7 - 4427k^6 - 2688k^5 \\
& + 3568k^4 + 4212k^3 + 2k^2 - 1512k - 504, \\
P_7(k)&=8k^{15} + 56k^{14} - 18k^{13} - 226k^{12} + 2043 k^{11} + 4597k^{10} - 4467k^9 - 13839k^8 + 175k^7 + 23793k^6 \\
&+ 14024k^5 - 14682k^4 - 16344k^3 + 132k^2 + 5136k + 1584, \\
P_8(k)&=15k^{17} + 105k^{16} + 10k^{15} - 230k^{14} + 4127 k^{13} + 6093k^{12} - 17859k^{11} - 26735k^{10} + 33965k^{9}  \\ &+ 90795k^8 + 743k^7 - 122671k^6 - 66498k^5 + 62404k^4 + 63975k^3 - 1227k^2 - 17952k - 5148, \\
P_9(k)&=13k^{20} + 117k^{19} + 65k^{18} - 343k^{17} + 8104k^{16} + 22252k^{15} - 38899k^{14} - 132601k^{13} + 93111k^{12} \\
& + 543985k^{11} + 246145k^{10} - 973159k^9 - 1119830k^8 + 569036k^7 + 1503045k^6 + 327437k^5 \\
& - 789372k^4 - 495246k^3 + 79788k^2 + 145288k + 34320, \\
P_{10}(k)&=18k^{23} + 234k^{22} + 705k^{21} - 27k^{20} + 14027k^{19} + 90809k^{18} + 14296k^{17} - 625902k^{16} - 286343k^{15} \\
&+ 3284727k^{14} + 4457207k^{13} - 6425791k^{12} - 18246084k^{11} - 1946970k^{10} + 29470120k^9 \\& + 22790446k^8 - 18143346k^7 - 29954584k^6 - 2449280k^5 + 15217542k^4 + 7441628k^3  \\&- 1797928k^2 - 2207920k - 466752,
\end{align*}
 \endgroup
 
 \vspace{-0.5cm}
   \begingroup
\addtolength{\jot}{-0.3em}
\begin{align*}
\widehat P_1(k)&=1,\\
\widehat  P_2(k)&=3k^2 - 2,\\
\widehat  P_3(k)&=4k^5 + 8k^4 - 13k^3 - 16k^2 + 9k + 10, \\
\widehat P_4(k)&=7k^7 + 14k^6 - 37k^5 - 34k^4 + 63k^3 + 52k^2 - 34k - 28,\\
\widehat P_5(k)&=6k^9 + 12k^8 - 85k^7 - 20k^6 + 263k^5 + 114k^4 - 316k^3 - 182k^2 + 126k + 84,\\
\widehat P_6(k)&=12k^{13} + 84k^{12} + 37k^{11} - 641k^{10} - 208k^9 + 3432k^8 + 2181k^7 - 8259k^6 - 9296k^5 + 5826k^4 \\ &+ 11388k^3 + 1212k^2 - 4128k - 1584, \\
\widehat P_7(k)&=8k^{15} + 56k^{14} - 134k^{13} - 1038k^{12} + 1209k^{11} + 8399k^{10} - 1437k^9 - 30289k^8 - 14762k^7 + 49794k^6 \\& + 47479k^5 - 28209k^4 - 47255k^3 - 3705k^2 + 14784k + 5148, \\
\widehat P_8(k)&=15k^{17} + 105k^{16} - 175k^{15} - 1525k^{14} + 3378k^{13} + 14270k^{12} - 18711k^{11} - 75671k^{10} + 22370k^9 \\& + 217496k^8 + 82709k^7 - 281085k^6 - 230229k^5 + 133519k^4 + 194024k^3 + 10522k^2 - 53768k - 17160,\\
\widehat P_9(k)&=13k^{20} + 117k^{19} - 193k^{18} - 2665k^{17} + 4723k^{16} + 38311k^{15} - 25510k^{14} - 286096k^{13} - 84069k^{12} \\ &+ 1078569k^{11} + 1101406k^{10} - 1794798k^9 - 3270866k^8 + 608204k^7 + 4052054k^6 + 1511634k^5 \\& - 2038568k^4 - 1608678k^3 + 145948k^2 + 454168k + 116688,\\
\widehat P_{10}(k)&=18k^{23} + 234k^{22} + 321k^{21} - 5019k^{20} - 3481k^{19} + 105761k^{18} + 124941k^{17} - 1011109k^{16} \\& - 1807088k^{15}  + 4606256k^{14} + 12818397k^{13} - 5900291k^{12} - 43470981k^{11} - 20938025k^{10} \\ &+ 65449953k^9  + 73811749k^8 - 33300902k^7 - 87108698k^6 - 17953072k^5 + 42333788k^4 + 25221880k^3 \\&- 4266288k^2 - 7093216k - 1612416.
\end{align*}

 \endgroup

\end{document}